
\documentclass[12pt]{amsart}
\usepackage{amsfonts,amssymb,latexsym,amsmath, amsxtra, stmaryrd}
\usepackage[all]{xy}
\usepackage{graphicx}
\usepackage{color}
\pagestyle{myheadings}
\textheight=8.5 true in \textwidth=6.5 true in \hoffset=-0.8true in

\theoremstyle{plain}
\newtheorem{theorem}{Theorem}[section]

\newtheorem{proposition}[theorem]{Proposition}

\newtheorem*{conjecture*}{Conjecture}
\newtheorem*{challenge*}{Open Problem}

\theoremstyle{definition}
\newtheorem{definition}[theorem]{Definition}
\newtheorem*{definition*}{Definition}

\newtheorem{example}[theorem]{Example}
\theoremstyle{remark}
\newtheorem*{remark}{Remark}

\numberwithin{equation}{section}

\newcommand{\eps}{\epsilon}
\newcommand{\R}{\mathbb R}

\newcommand{\Z}{\mathbb Z}
\newcommand{\C}{\mathbb C}
\def\H{\mathbb H}

\newcommand{\Q}{{\mathbb Q}}

\def\({\left(}
\def\){\right)}

\newcommand{\ol}[1]{\overline{{#1}}}

\newcommand{\abs}[1]{\left|#1\right|}

\newcommand{\Gal}{\mathrm{Gal}}


\newcommand{\bC}{{\mathbb C}}

\newcommand{\bH}{{\mathbb H}}

\newcommand{\bQ}{{\mathbb Q}}
\newcommand{\bR}{{\mathbb R}}

\newcommand{\bZ}{{\mathbb Z}}

\newcommand{\cG}{{\mathcal G}}

\newcommand{\cN}{{\mathcal N}}
\newcommand{\cO}{{\mathcal O}}

\newcommand{\cS}{{\mathcal S}}

\newcommand{\fa}{{\mathfrak a}}

\newcommand{\ff}{{\mathfrak f}}

\newcommand{\fm}{{\mathfrak m}}

\newcommand{\uu}{u}
\newcommand{\cn}{A_n}


\def\k2{\frac{k}{2}}

\begin{document}

\title[renormalization and  quantum modular forms, Part I]{
Renormalization and quantum modular forms, Part I: Maass wave forms}
\author{Yingkun Li}
\address{UCLA Mathematics Department, Box 951555, Los Angeles, CA 90095-1555, USA}
\email{yingkun@math.ucla.edu}

\author{Hieu T. Ngo}
\address{Dept. of Mathematics, University of Michigan, Ann Arbor, MI 48109-1043, USA}
\email{hieu@umich.edu}

\author{Robert C. Rhoades}
\address{Department of Mathematics, Stanford University, Stanford, CA.
94305}
\curraddr{Center for Communications Research,
Princeton, NJ 08540}
\email{rhoades@math.stanford.edu, rob.rhoades@gmail.com}


\thispagestyle{empty} \vspace{.5cm}
\begin{abstract}

Sander Zwegers showed that Ramanujan's mock theta functions are
$q$-hypergeometric series, whose $q$-expansion coefficients are half
of the Fourier coefficients of a non-holomorphic modular form. George
Andrews, Henri Cohen, Freeman Dyson, and Dean Hickerson found a pair
of $q$-hypergeometric series each of which contains half of the
Fourier coefficients of Maass waveform of eigenvalue $1/4$.

This series of papers shows that a $q$-series construction, called
``renormalization'', yields the other half of the Fourier coefficients
from a series which contains half of them.  This construction unifies
examples associated with mock theta functions and examples associated
with Maass waveforms.  Thus confirming a conviction of Freeman Dyson.
This construction is natural in the context of Don Zagier's quantum
modular forms. Detailed discussion of the role quantum modular forms
play in this construction is given.

New examples associated to Maass waveforms are given in Part I.  Part
II contains new examples associated with mock theta functions, and
classical modular forms.  Part II contains an extensive survey of the
``renormalization'' construction.  A large number of examples and open
questions which share similarities to the main examples, but remain
mysterious, are given.
\end{abstract}

\maketitle


\section{Introduction}\label{sec:intro}\label{sec:Intro}
Ramanujan studied the function 
\begin{align}
\sigma(q) :=& 1+ \sum_{n=1}^\infty \frac{q^{\frac{1}{2} n(n+1)}}{(1+q)(1+q^2)\cdots (1+q^n)} \nonumber  \\
 =& \sum_{n=0}^\infty S(n)q^n = 1 + q - q^2 +2 q^3 +  \cdots + 2q^{55} + q^{57} - 2q^{62} + \cdots + 6 q^{1609} + \cdots 
  \label{eqn:sigma}  
\end{align}
in his ``Lost'' Notebook \cite{andrewsEuler}, where $\abs{q} < 1$.  
This series is the generating function for the number of partitions of $n$ into distinct parts with even rank minus the number with odd rank.  The rank of a partition (see \cite{AS-D, dyson44})
is the largest part minus the number of parts. 
George Andrews \cite{andrewsMonthly} 
made the curious conjectures that 
(1) $S(n)$ is zero infinitely often and (2) $\limsup \abs{S(n)} = +\infty$.

%
%

These conjectures were established by Andrews together with Freeman
Dyson and Dean Hickerson \cite{ADH} who showed that $S(n)$ is equal to
the number of inequivalent solutions to
\begin{equation*}\label{eqn:pell}
u^2 - 6v^2 = 24n+1
\end{equation*}
such that $u+3v \equiv \pm 1\pmod{12}$ minus the number of such solutions with $u+3v \equiv \pm 5 \pmod{12}$. 
\begin{remark}
Two solutions $(u,v)$ and $(u',v')$
are equivalent if $(u'+v'\sqrt{6}) = \pm (5 + 2\sqrt{6})^r
(u+v\sqrt{6})$ for some integer $r$.
\end{remark}
It follows from the arithmetic of $\Q(\sqrt{6})$ that $S(n)$ assumes
every integer value infinitely often.  Moreover, Henri Cohen
\cite{cohen} showed that it follows that the integers $S(n)$ are the
``positive'' Fourier coefficients of a Maass wave form $\varphi_0$ of
eigenvalue $1/4$.  Precisely,
$$\varphi_0(z):= 
 \sqrt{y}  \sum_{n\in 24\Z+1} T(n) K_0(2\pi \abs{n} y/24) e^{2\pi i n x/24},$$
satisfies the modular properties
$$\varphi_0(-1/2z) = \ol{\varphi_0(z)} \ \ \ \text{ and } \ \ \ \varphi_0(z+1) = e^{2\pi  i/24} \varphi_0(z),$$ 
where $K_0$ is a $K$-Bessel function and $z := x+iy$ with $x, y\in \R$ such that  $y>0$, and 
$$S(n) = T(24n+1).$$
Thus, the positive Fourier coefficients of the non-holomorphic
modular form $\varphi_0(z)$ are encoded by $\sigma(q)$.

In this paper we show how the series in \eqref{eqn:sigma} reveals both
the positive and negative Fourier coefficients of the Maass
wave form $\varphi_0(z)$.  In Section \ref{sec:Renormalization} a
construction is given to extend a function from $\abs{q}<1$ to a
function on $\abs{q}>1$.  The pair of functions together contain all
of the Fourier coefficients of a (non-holomorphic) modular form.
Additionally, these functions agree at certain ``holes'', i.e. a
subset of $\abs{q} =1$.  This ``leaking'' results in examples of
Zagier's ``quantum modular forms''.

A new example analogous to $\sigma(q)$ is given in Section
\ref{sec:Results}.  A new feature of our work is the relationship
between these ``holes'' and the cuspidality of the underlying modular
form.

This paper deals only with Maass wave forms. Part II of this series
contains examples of the same phenomenon related to Ramanujan's mock
theta functions.

\subsection{Renormalization}\label{sec:Renormalization}\label{sec:renormalization}
This section defines the $q$-series construction called
\emph{renormalization}.

\begin{definition}
A \emph{$q$-hypergeometric series} is a sum of the form
$H(q)=\sum_{n=0}^\infty H_n(q)$ where $H_n(q) \in \Q(q)$ and
$H_{n+1}(q)/H_n(q)= R(q,q^n)$ for all $n\ge0$ for some fixed rational
function $R(q, r) \in \Q(q, r)$.
\end{definition}

Renormalization is a construction
used for extending $H(q)$ to the region $\abs{q}>1$.

\begin{definition}(Renormalization) Let 
$\varphi(z) = \sum_{n\in \Q} h_{n} K_0(2\pi \lambda \abs{n} y) e^{2\pi
    i \lambda n x}$ for some $\lambda \in \Q^+$.  Consider a
  $q$-hypergeometric series $H(q)=\sum_{n=0}^\infty H_n(q)$ that
  converges for $\abs{q}<1$ such that $H(q) = \sum_{n\in \Q^\pm } h_n
  q^{ \lambda \abs{n} + \alpha }$ for some $\alpha \in \Q$.  When
  $\lim_{n\to \infty} H_n(q^{-1})$ converges as a power series in $q$
  with $\abs{q}<1$, define the series $\cS[H](q)$ and $\cG[H](q)$ by
\begin{equation}\label{eqn:sumsoftails}
\cS[H](q) := \sum_{n=0}^\infty \( H_n(q^{-1}) - H_\infty(q^{-1})\) - \cG[H](q)
\end{equation}
where $H_{\infty}(q^{-1}) := \lim_{n\to \infty} H_{n}(q^{-1})$ such that 
\begin{enumerate}
\item  $\cG[H](q)$ vanishes to infinite order at every root of unity $q$ where $H(q^{-1})$ converges. 
\item $\cS[H](q)$ converges for $\abs{q}<1$ and  $ \cS[H](q) = -\sum_{n \in \Q^{\mp} } h_n q^{\lambda \abs{n}- \alpha}$. 
\end{enumerate}
The term $\cS[H](q)$ is called the \emph{shadow} of $H$ and
$\cG[H](q)$ is called the \emph{ghost} of $H$.
\end{definition}

\begin{remark}
In certain cases, the construction in \eqref{eqn:sumsoftails} is known
as the ``sums of tails''.  It appears in Ramanujan's work, as well as
in a number of other papers. See \cite{andrewsEuler, AGL, AJO, BK2,
  zagierStrange}, for example.
\end{remark}

\begin{example}
Andrews, Dyson, and Hickerson introduced the series 
\begin{align}
\sigma^*(q):= &  \sum_{n=0}^\infty S^*(n) q^n = 2 \sum_{n=1}^\infty \frac{(-1)^n q^{n^2}}{(q;q^2)_n}  \label{eqn:sigma*} 
=& -2q -2q^{2} + \cdots + 2q^{66} - 2q^{67} - 4q^{70} + \cdots  
\end{align}
and showed that $S^*(n) = T(-24n+1)$. 

The following calculations 
demonstrate that $\cS[\sigma](q)$ is equal to  $-\sigma^*(q)$.
In the notation of \eqref{eqn:sumsoftails},  
$\sigma(q) = \sum_{n=0}^\infty \sigma_n(q)$ with 
$\sigma_n(q) = \frac{q^{\frac{1}{2}n(n+1)}}{(-q;q)_n}$.
So  $\sigma_n(q^{-1})  = \frac{1}{(-q;q)_n}$ and $\sigma_\infty(q^{-1})  = \frac{1}{(-q;q)_\infty}$.
Then
\begin{align} \label{eqn:rama1}
\sum_{n=0}^\infty \( \sigma_n(q^{-1}) - \sigma_\infty(q^{-1})\) &= \sum_{n=0}^\infty \(\frac{1}{(-q;q)_n} - \frac{1}{(-q;q)_\infty}\)
 \\  &= 2\sum_{n=1}^\infty \frac{(-1)^{n-1} q^{n^2}}{(q;q^2)_n}
-\frac{1}{(-q;q)_\infty} \(\sum_{n=1}^\infty \frac{q^n}{1-q^n}\) \nonumber
\end{align}
(for the second equality see, for instance, (3.27) of \cite{AGL}). 
The first term on the right hand side is $-\sigma^*(q)$.
The second  term is $\cG[\sigma](q)$, which vanishes to infinite order at all roots of unity where $\sigma(q^{-1})$ exists. 
Thus $\cS[\sigma](q) = - \sigma^*(q)$. 
\end{example}

\subsection{Quantum Modular Forms}\label{sec:QMF}

Andrews, Dyson, Hickerson, and Cohen \cite{andrewsEuler, ADH,  cohen} 
established the following
identities for $\sigma(q)$ and $\sigma^*(q)$  and $\abs{q}<1$
which allows evaluation for $q$ an arbitrary root of unity
\begin{align}
\sigma(q) = & 1+ \sum_{n=0}^\infty q^{n+1} (q-1) (q^2-1) \cdots (q^n-1) \label{eqn:sigma_rootsOfUnity}\\
\sigma^*(q) =& -2 \sum_{n=0}^\infty q^{n+1} (1-q^2) (1-q^4) \cdots (1-q^{2n}) \label{eqn:sigma*_rootsOfUnity}
\end{align}
Cohen observed that for every root of unity $q$, 
$$
\sigma(q^{-1}) = - \sigma^*(q).
$$
Define $$Q_\sigma(z) := \begin{cases} q^{1/24} \sigma(q) & z \in \H \cup \Q \\ 
- q^{1/24} \sigma^*(q^{-1}) & z \in \H^- \cup \Q \end{cases}$$
with $q := e^{2\pi i z}$. 
The above equality shows how $\sigma$ can ``leak'' from  $\bH$ to $\bH^-$ 
through the dense set of ``holes'' $\bQ$ on the real axis.

Using the results of \cite{ADH, cohen} and joint results with Lewis
\cite{LewisZagier}, Zagier \cite{zagierQuantum} showed that, for
$x\in\bQ$,
$$Q_\sigma(x +1) -e^{\frac{2\pi i}{24}} Q_\sigma(x) = 0 \ \ \ \text{ and } \ \ \ 
\frac{1}{2x+1} Q_\sigma \( \frac{x}{2x+1}\) - e^{\frac{2\pi i}{24}} Q_\sigma(x) = h(x)$$
where $h:\R \to \C$ is $C^\infty$ on $\R$ and real-analytic except at $x = -1/2$.

Motivated by the pair $(\sigma, \sigma^*)$, a few other examples, 
 Zagier \cite{zagierQuantum} gave a notation of ``quantum modular form''.
However, there was no precise definition, so that more examples can be considered.
Here, we give a definition of quantum modular form that captures some key properties mentioned 
in \cite{zagierQuantum} and is precise for our purposes.
\begin{definition}
\label{def:quantum}
Let $k\in \frac{1}{2} \Z$ , $S \subset \bQ$ a dense subset, $\Gamma
\subset SL_2(\bZ)$ a subgroup of finite index and $\chi: \Gamma
\longrightarrow \bC^\times$ a finite order character.  A function $f:
S \longrightarrow \bC$ is a \emph{quantum modular form} of weight
$k$ and character $\chi$ with respect to $\Gamma$ if for every $\gamma
\in \Gamma$, the period function $h_\gamma:\R \to \C$ given by
\begin{equation}
\label{eq:period_function}
h_\gamma(x): = \chi(\gamma) f(x)  - (f \mid_{k, \gamma}) (x) 
\end{equation}
is $C^\infty$ or real-analytic at all but a finite set of points on $\bR$.
Here $\mid_{k, \gamma}$ is the weight $k$ slash operator 
$$
(f \mid_{k, \gamma}) (x) = f(\gamma x)(cx+d)^{-k}
$$
when $\gamma=\begin{bmatrix} a & b \\ c & d\end{bmatrix}$.
\end{definition}

\section{Statement of Results}\label{sec:Results}

In Section \ref{sec:Intro} the pair $(\sigma, \sigma^*)$ is used to
illustrate renormalization of $q$-hypergeometric series.  This section
contains a second example associated to a Maass waveform of eigenvalue
$1/4$.  Examples associated with mock theta functions and holomorphic
modular forms are given in Part II of this series \cite{lnrPart2}.

Let $K = \Q(\sqrt{2})$.  For ideals $\fa \subset \Z[\sqrt{2}]$ define 
\begin{equation}
\chi_{W}(\fa) := \begin{cases} 1 & \cN\fa \equiv \pm 1 
\pmod{16} \\ -1 & \cN\fa \equiv \pm 7 \pmod{16} \\
0 & \text{otherwise} \end{cases}, 
\end{equation}
where $\cN\fa=\cN_{\bQ(\sqrt{2})/\bQ}(\fa)$ denotes the usual ideal norm.
This is a Hecke character and hence it can be associated to a Maass waveform:

\begin{theorem}\label{thm:MaassW} 
For $z=x+iy$ in the Poincar\'{e} upper half plane $\bH$, define 
\begin{equation} 
\label{eqn:maass3b} \varphi_{0,W}(z) =  y^{1/2} \sum_{n\in\bZ} T_W(n) K_0(2\pi |n|y/8) e^{2\pi i nx/8},
\end{equation}
where $K_0(y)$ is the $K$-Bessel function.
Then $\varphi_{0,W}(z)$ is a Maass waveform. More precisely: 
\begin{enumerate}
\item $\varphi_{0,W}$ is an eigenfunction of the hyperbolic Laplacian $\Delta=-y^2 \left(\frac{\partial^2}{\partial x^2}+ \frac{\partial^2}{\partial y^2} \right)$ with eigenvalue $\lambda=1/4$;
\item $\varphi_{0,W}\left( \frac{-1}{4z} \right)=\overline{\varphi_{0,W}(z)}$;
\item for each $k \in \bZ$, there exists a multiplier system $\nu_k$ for $\Gamma_0(4)$, uniquely determined by
\[
\nu_k\left( \begin{bmatrix} 1 & 1 \\ 0 & 1 \end{bmatrix} \right) = e^{2\pi i/8}, \ \  \ \ 
\nu_k\left( \begin{bmatrix} 3 & -1 \\ 4 & -1 \end{bmatrix} \right) = 1, \ \ \ \
\nu_k\left( \begin{bmatrix} -1 & 0 \\ 0 & -1 \end{bmatrix} \right) = (-1)^k, \ \ \ \
\]
such that $\varphi_{0,W}(\gamma z)=\nu_0(\gamma)\varphi_{0,W}(z)$ for every $z\in\bH$ and every $\gamma\in\Gamma_0(4)$;
\item $\varphi_{0, W}(z)$ vanishes at the cusps $0$ and $\infty$, but does \textbf{not} vanish at the cusp $1/2$. 
\end{enumerate}
\end{theorem}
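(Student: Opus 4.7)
The strategy is to realize $\varphi_{0,W}$ as the Maass form of Hecke type associated with the finite-order Hecke character $\chi_W$ on $K=\bQ(\sqrt{2})$, then read off each part of the theorem from analytic properties of the Dirichlet series $L(s,\chi_W)=\sum_\fa \chi_W(\fa)(\cN\fa)^{-s}$. A preliminary step is to verify that $\chi_W$ is a genuine Hecke character: since $\chi_W(\fa)$ depends only on $\cN\fa\bmod 16$ and units of $\bZ[\sqrt{2}]$ have norm $\pm 1$, well-definedness on principal ideals is automatic, and one identifies $\chi_W$ as a real-valued, parallel-weight-zero character of the ray class group of $K$ of a modulus dividing $(8)$.

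For (1), each Whittaker mode $y^{1/2}K_0(2\pi|n|y/8)e^{2\pi inx/8}$ is annihilated by $\Delta-\tfrac14$ (spectral parameter $s=\tfrac12$), so the eigenequation for $\varphi_{0,W}$ follows by termwise differentiation, justified by the exponential decay of $K_0(y)$ and a standard polynomial bound on $T_W(n)$. For (2), the Fricke identity is equivalent, via Mellin inversion, to the symmetric functional equation $\Lambda(s,\chi_W)=\Lambda(1-s,\chi_W)$ of the completed $L$-function, together with the self-duality $\overline{\chi_W}=\chi_W$; this is standard Hecke theory via Poisson summation against the theta kernel on $K$, once the root number is computed by an explicit Gauss-sum calculation modulo $16$ and shown to equal $+1$. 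For (3), the multiplier $\nu_0$ is verified on each of the three listed generators: the translation multiplier $e^{2\pi i/8}$ encodes that $T_W(n)$ is supported in a single residue class modulo $8$ (checked from the definition of $\chi_W$ using Galois conjugation of ideals); the value $1$ at $\smatr{3}{-1}{4}{-1}$ follows from a direct action-on-cosets computation; and $\nu_0(-I)=1$ is immediate from the trivial parity of $\chi_W$ at infinity.

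Part (4) is the most delicate assertion and contains the novel content of the theorem. Vanishing at $\infty$ is immediate, since no $n=0$ term appears and the $K$-Bessel modes decay exponentially; vanishing at $0$ then follows from (2), since the Fricke involution exchanges the cusps $0$ and $\infty$. For the cusp $1/2$, take the scaling matrix $\sigma_{1/2}=\smatr{1}{0}{2}{1}$, which sends $\infty\mapsto 1/2$. One checks that $\sigma_{1/2}T\sigma_{1/2}^{-1}=(S')^{-1}\in\Gamma_0(4)$ carries trivial multiplier $\nu_0(S')^{-1}=1$, so $(\varphi_{0,W}\circ\sigma_{1/2})(z)$ admits an integer-period Fourier expansion $\sum_n c_n(y)e^{2\pi inx}$, and non-vanishing at $1/2$ amounts to non-vanishing of the constant term $c_0(y)\in\bC\sqrt{y}+\bC\sqrt{y}\log y$. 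I would compute $c_0(y)$ by unfolding the defining series of $\varphi_{0,W}$ against a parabolic-coset system for the cusp $1/2$ and Mellin transforming; the outcome is the value at $s=\tfrac12$ of a Dirichlet series of the form $\sum_\fa \chi_W(\fa)\psi(\cN\fa)(\cN\fa)^{-s}$ for a quadratic character $\psi$ determined by the cusp.

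The main obstacle, and the heart of the argument, is to show this central value is nonzero. My preferred approach is to exhibit the twisted character $\chi_W\otimes\psi$ as induced from a Dirichlet character on $\bQ$, so that $L(s,\chi_W\otimes\psi)$ factors as a product of two Dirichlet $L$-functions whose non-vanishing at $s=\tfrac12$ is classical. Should no such factorization exist, I would instead verify non-vanishing of $c_0(y)$ directly, either by expressing the relevant sum as a finite combination of class-number-type invariants of orders in $\bZ[\sqrt{2}]$ or by explicit numerical evaluation from the Fourier coefficients $T_W(n)$.
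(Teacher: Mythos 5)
Your outline for parts (1)--(3) follows the paper's route (Hecke character, Mellin inversion, verification on generators), but two steps are underspecified in ways that matter. For (2), the Fricke identity for a \emph{Maass} form is not equivalent to a single functional equation: a Maass form is not determined by its restriction to the imaginary axis, so one must also match the $x$-derivative there, which requires a second functional equation for the odd-twisted $L$-function. The paper accordingly introduces both $\Lambda(s)$ (for $\chi_W$) and $\Lambda_1(s)$ (for $\chi_W\cdot\leg{-1}{\cN}$) and uses $\Lambda(s)=\Lambda(1-s)$ together with $\Lambda_1(s)=\Lambda_1(1-s)$. For (3), a ``direct action-on-cosets computation'' does not establish a transformation law under $B=\smatr{3}{-1}{4}{-1}$; the only inputs available are the translation law and the Fricke involution, and the paper extracts the $B$-law from the matrix identity $ADAD=4B$ with $D=\smatr{0}{-1}{4}{0}$. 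You need that identity (or an equivalent word in $A$ and $D$) to make this step go through.

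Part (4) is where your route genuinely diverges, and where the main gap lies. Your plan is to unfold at the cusp $1/2$ and show a value at $s=\tfrac12$ of a twisted Dirichlet series is nonzero, asserting that non-vanishing of Dirichlet $L$-functions at the central point is ``classical.'' It is not: non-vanishing of $L(\tfrac12,\chi)$ for an individual Dirichlet character is in general an open problem, so the step on which your argument pivots is unsupported (and your fallback of numerical evaluation of $c_0(y)$ is not an argument as stated). The paper avoids any constant-term computation: it observes that the ray class field $K_\ff$ for $\ff=(\sqrt 2)^5$ is the totally real quartic subfield of $\bQ(\zeta_{16})$, hence abelian over $\bQ$, so the induced representation $\rho_W$ is reducible and $\varphi_{0,W}(8z)$ is a Maass--Eisenstein series; an Eisenstein series cannot vanish at every cusp, and since $\varphi_{0,W}$ vanishes at $0$ and $\infty$ it must fail to vanish at $1/2$. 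Your instinct that the $L$-function factors through Dirichlet characters is correct --- the paper records $L(\rho_W,s)=L(\chi,s)L(\overline\chi,s)$ with $\chi$ mod $16$, $\chi(3)=i$ --- but the right use of that factorization is to certify reducibility (hence Eisenstein-ness), not to feed a central-value non-vanishing argument.
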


Define the following $q$-hypergeometric series 
\begin{align}
\label{eq:Wq_definition}
W(q):= \sum_{n=1}^\infty W_{n}(q) :=&  \sum_{n=1}^\infty \frac{(-1;q^2)_n (-1)^n q^n}{(q;q^2)_n} \\
=& -2 q - 2q^3 + 2q^4 + 2q^6 + 2q^8 - 2q^{9} +  \cdots - 2q^{97}  - 4q^{99} + 4q^{100} + \cdots  \nonumber 
\end{align}

The following theorem which
relates the pair of $q$-hypergeometric series $(W(q),\cS[W](q))$ to
the Maass waveform $\varphi_{0,W}$ and thus closes the analogy with
the pair $(\sigma, \sigma^*)$. Moreover, the theorem shows that the 
renormalization construction is an involution.

 \begin{theorem}\label{thm:W2}\label{thm:realquadratic_coefficients}
In the notation of Theorem \ref{thm:MaassW}, we have
$$
W(q)
 = \sum_{n>0} T_W(8n-1) q^n
$$
and 
$$\cS[W](q) = - \sum_{n<0} T_W(8n + 1) q^n$$
where 
\begin{equation}\label{eqn:W2star}
\cS[W](q) := \sum_{n=1}^\infty \( 
W_{n}(q^{-1}) - W_{\infty}(q^{-1})
\)
- \frac{(-1;q^2)_\infty}{(q;q^2)_\infty} \( \frac{1}{2} - \sum_{n=1}^\infty \frac{q^n}{1+q^n}\),
\end{equation}
we have 
\begin{align*}
\cS[W](q):=& -\sum_{n=0}^\infty \frac{(q;q^2)_n (-1)^nq^n}{(-q^2;q^2)_n}  \\
=&-( 1 -q + 2q^2 -q^3 -2q^5 + 3q^6 -2q^9 + q^{10} + \cdots - q^{91} - 2q^{95} + 2q^{96} + 2q^{100} + \cdots )
\end{align*}
and 
$$
\cG[W](q) = - \frac{(-1;q^2)_\infty}{(q;q^2)_\infty} \( \frac{1}{2} - \sum_{n=1}^\infty \frac{q^n}{1+q^n}\).$$

Moreover, 
the \emph{renormalized shadow} of the series $\cS[W](q)$ is $W(q)$, i.e.
$\cS[\cS[W]](q) = W(q).$
More precisely, with $\cS[W]_n(q) := \frac{(-1)^{n+1} q^n (q;q^2)_n}{(-q^2;q^2)_n}$
so that $\cS[W](q) = \sum_{n=0}^\infty \cS[W]_{n}(q)$, 
\begin{align*}
\sum_{n=0}^\infty \( \cS[W]_n(q^{-1}) - \cS[W]_\infty(q^{-1})\) =& 
-\sum_{n=0}^\infty \( \frac{(q;q^2)_n}{(-q^2;q^2)_n} - \frac{(q;q^2)_\infty}{(-q^2;q^2)_\infty}\) \\
=& W(q) + \cG[\cS[W]](q)
\end{align*}
where 
$$\cG[\cS[W]](q) = \frac{(q;q^2)_\infty}{(-q^2;q^2)_\infty} \( \sum_{n=1}^\infty \frac{q^{2n}}{1-q^{2n}} + \sum_{n=1}^\infty \frac{q^{2n+1}}{1+q^{2n+1}}\).$$
\end{theorem}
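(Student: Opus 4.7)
The plan is to mirror the treatment of $(\sigma,\sigma^*)$ due to Andrews--Dyson--Hickerson and Cohen, in three stages: first, a Hecke-type theta identification of $W(q)$ with the positive-coefficient half of $\varphi_{0,W}$; second, a single sum-of-tails identity that simultaneously produces the shadow $\cS[W]$ and the ghost $\cG[W]$ from $\lim_n W_n(q^{-1})$; and third, a symmetric application of the same machinery to $\cS[W]$ that yields the involution $\cS[\cS[W]] = W$ together with the stated formula for $\cG[\cS[W]]$.

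For the first stage I would recast
\begin{equation*}
W(q) = \sum_{n\ge 1}\frac{(-1;q^2)_n\,(-1)^n q^n}{(q;q^2)_n}
\end{equation*}
as a Hecke-type indefinite theta sum on the real-quadratic lattice attached to $K=\bQ(\sqrt{2})$. Concretely, the goal is a representation $W(q) = \sum_{(u,v)} \epsilon(u,v)\,q^{(u^2-2v^2+1)/8}$ over $(u,v)\in\bZ^2$ with $u^2-2v^2>0$ in a fundamental domain for the action of the totally positive fundamental unit, where $\epsilon(u,v)$ records the $\chi_W$-congruence classes modulo $16$. This step parallels the Bailey-pair / Hecke-type manipulation carried out for $\sigma$ in \cite{ADH, cohen}. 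Once the theta representation is in hand, the identification $W(q) = \sum_{n>0} T_W(8n-1)\,q^n$ is immediate from the Fourier expansion of $\varphi_{0,W}$ in Theorem \ref{thm:MaassW}, since $T_W(m)$ is the weighted ideal count $\sum_{\cN\fa=|m|}\chi_W(\fa)$ (modulo units) that defines the Hecke--Maass lift of $\chi_W$.

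The analytic heart of the proof lies in the second stage. Applying the standard inversion $(a;q^{-1})_n = (a^{-1};q)_n(-a)^n q^{-\binom{n}{2}}$, a short computation gives
\begin{equation*}
W_n(q^{-1}) = \frac{2(-q^2;q^2)_{n-1}}{(q;q^2)_n}, \qquad W_\infty(q^{-1}) = \frac{2(-q^2;q^2)_\infty}{(q;q^2)_\infty}.
\end{equation*}
The key identity to prove is then
\begin{equation*}
\sum_{n\ge 1}\left(\frac{(-q^2;q^2)_{n-1}}{(q;q^2)_n}-\frac{(-q^2;q^2)_\infty}{(q;q^2)_\infty}\right) = -\frac{1}{2}\sum_{n\ge 0}\frac{(-1)^n(q;q^2)_n\,q^n}{(-q^2;q^2)_n} + \frac{(-q^2;q^2)_\infty}{(q;q^2)_\infty}\left(\sum_{n\ge 1}\frac{q^n}{1+q^n}-\frac{1}{2}\right),
\end{equation*}
which is the base-$q^2$ analog of Ramanujan's identity (equation (3.27) of \cite{AGL}) underlying the $\sigma$/$\sigma^*$ case. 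I would derive it by a Heine transformation followed by extraction of the Lambert-series logarithmic-derivative term, or alternatively by a Bailey-pair construction in the style of \cite{AJO, BK2}. Reading off the two sides of the renormalization identity, the first summand on the right is $\cS[W](q)$ and the second is $\cG[W](q)$; the latter manifestly vanishes to infinite order at every root of unity where $(q;q^2)_\infty\ne 0$, as required. A second Hecke-theta argument on the opposite cone $u^2 < 2v^2$ matches $-\sum_{n\ge 0}(-1)^n(q;q^2)_n q^n/(-q^2;q^2)_n$ term-by-term with $-\sum_{n<0}T_W(8n+1)\,q^n$, completing the identification of $\cS[W]$ with the negative Fourier coefficients of $\varphi_{0,W}$.

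The third stage applies the same machinery to $\cS[W]_n(q) = (-1)^{n+1} q^n (q;q^2)_n/(-q^2;q^2)_n$. The identical inversion rule yields $\cS[W]_n(q^{-1}) = -(q;q^2)_n/(-q^2;q^2)_n$ and $\cS[W]_\infty(q^{-1}) = -(q;q^2)_\infty/(-q^2;q^2)_\infty$; then the companion sum-of-tails identity, obtained by interchanging the roles of $(-1;q^2)_n$ and $(q;q^2)_n$ in the identity of stage two, returns $W(q)$ plus the stated formula for $\cG[\cS[W]](q)$, proving $\cS[\cS[W]] = W$. I expect the main obstacle to be the pair of sum-of-tails identities in stages two and three: the template going back to Ramanujan is classical, but the specific base-$q^2$ variants with $(-1;q^2)_n$ in the numerator are not direct quotations from the literature, and keeping the ghost term clean enough that its infinite-order vanishing at the relevant roots of unity is visible term-by-term is the delicate technical point.
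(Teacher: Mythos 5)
Your three-stage plan is, in substance, exactly the paper's proof: a Hecke-type indefinite theta identification of the two halves, a sums-of-tails identity producing shadow plus ghost, and a second application of the same identity for the involution. Two remarks. First, the two base-$q^2$ sums-of-tails identities that you single out as the main obstacle do not need to be proved from scratch: they are direct specializations of the general Andrews--Jim\'enez-Urroz--Ono theorem quoted in the Preliminaries (Theorem \ref{thm:AJO}), with $q\mapsto q^2$ and $(t,a)=(-q^2,q^3)$ (then multiply by $2/(1-q)$) for the shadow, and $(t,a)=(q,-q^2)$ for the involution. Likewise the Hecke-theta representations are quoted from Corson--Favero--Liesinger--Zubairy (Theorem \ref{thm:CFLZ}); for the negative half the paper does not run a second indefinite-theta computation on the opposite cone but instead proves the $q$-series identity $\cS[W](q)=W_1(q)$ (Proposition \ref{prop:W1}, via Ramanujan's Entry 1.7.2 and Fine's (6.3)) and then invokes the CFLZ identity for $W_1$ --- you will need some such bridge between the renormalized sum and the closed form $-\sum_{n\ge0}(q;q^2)_n(-1)^nq^n/(-q^2;q^2)_n$ in any case. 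Second, your displayed ``key identity'' has the Lambert-series term with the wrong sign: expanding to order $q^3$, the left-hand side is $-q^2-2q^3+\cdots$ while your right-hand side is $-1+q-q^2+3q^3+\cdots$; the correct factor is $\left(\tfrac12-\sum_{n\ge1}\tfrac{q^n}{1+q^n}\right)$ rather than its negative, in agreement with \eqref{eqn:W2star}. This is a slip you would catch in carrying out the derivation rather than a conceptual gap, but as written the identity is false.
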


\begin{remark}
For completeness, we show here that $\cS$ is an involution for the pair $(\sigma, \sigma^*)$.  
In the notation of \eqref{eqn:sumsoftails}, 
$\cS[\sigma](q) =- \sum_{n=0}^\infty \sigma_{n}^*(q)$, with $\sigma_n^*(q) = \frac{2(-1)^{n}q^{n^2}}{(q;q^2)_n}$.  
So that $\sigma_n^*(q) = \frac{-2}{(q;q^2)_n}$. 
From \cite{AJO} (see also \cite{andrewsEuler} and \cite{cj} for proofs)
\begin{align}
\sum_{n=0}^\infty \( \sigma_n^*(q^{-1}) - \sigma_\infty^*(q^{-1}) \) =&
-2\sum_{n=1}^\infty \( \frac{1}{(q;q^2)_n} - \frac{1}{(q;q^2)_\infty}\)  \nonumber  \\=& 
\sum_{n=0}^\infty \frac{q^{\frac{1}{2}n(n+1)}}{(-q;q)_n}+ \frac{2}{(q;q^2)_\infty} \( -\frac{1}{2} + \sum_{n=1}^\infty \frac{q^n}{1-q^n}\). \label{eqn:dyson_example}
\end{align}
So, it is clear that $\cS[\cS[\sigma]] (q) = \sigma(q).$
That is the shadow of $\sigma(q)$ is $-\sigma^*(q)$ and vice-versa. 
\end{remark}

The following theorem gives the quantum modular form associated with the pair $(W, \cS[W])$. 
\begin{theorem}\label{thm:realquadratic_quantum}
For $\iota \in \{ 0, 1/2, \infty \}$, define the following dense subsets of $\bQ$
$$
S_\iota = \{ x \in \bQ: \gamma \cdot x = \iota \text{ for some } \gamma \in \Gamma_0(4) \}.
$$
Then the following function $f_W: S_\infty \cup S_0 \longrightarrow \bC$ is a quantum modular form with respect to $\Gamma_0(4)$ of weight 1 and character $\nu_1$ as in Theorem \ref{thm:MaassW}:
\begin{equation}
f_W(x) 
:= 
\left\{ 
\begin{array}{cc}
q^{-\tfrac{1}{8}} W(q) & \text{if } x \in S_\infty \\
q^{-\tfrac{1}{8}} \cS[W](q^{-1}) & \text{if }x \in S_0. 
\end{array}
\right.
\label{eqn:qm_def}
\end{equation}
\end{theorem}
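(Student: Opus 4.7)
The plan is to follow the blueprint of Zagier's quantum-modularity argument for $(\sigma, \sigma^*)$ \cite{zagierQuantum}, with our Maass waveform $\varphi_{0,W}$ of Theorem \ref{thm:MaassW} and the pair $(W, \cS[W])$ playing the analogous roles. Since $\Gamma_0(4)/\{\pm I\}$ is free of rank $2$, I would verify the period-function condition of Definition \ref{def:quantum} on generators $T = \smatr{1}{1}{0}{1}$ and $T' = \smatr{1}{0}{4}{1}$ (the case $\gamma = -I$ is a one-line computation with $\nu_1(-I) = -1$ that gives $h_{-I} \equiv 0$). Importantly, every element of $\Gamma_0(4)$ preserves the orbits $S_\infty$ and $S_0$ individually, so $h_\gamma$ is already defined on all of $S_\infty \cup S_0$ and no mixing between the two defining branches of $f_W$ occurs at the level of period functions.

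For $\gamma = T$: a direct computation using $z \mapsto z+1$ and the transformation of the $q^{-1/8}$ factor gives $f_W(x+1) = e^{-2\pi i/8} f_W(x)$ on both $S_\infty$ and $S_0$, whence $h_T(x) = \bigl(\nu_1(T) - e^{-2\pi i/8}\bigr) f_W(x)$; its regularity therefore reduces to that of $f_W$ itself, which is the output of the integral representation described below. The substantive case is $\gamma = T'$, for which I would construct an Eichler--Lewis--Zagier-type integral representation
\begin{equation*}
f_W(x) = \int_{C_x} \varphi_{0,W}(z)\, \omega(z,x)\, dz,
\end{equation*}
with $C_x$ a contour from the rational $x$ to a cusp at which $\varphi_{0,W}$ vanishes (the cusp $\infty$ for $x \in S_\infty$, the cusp $0$ for $x \in S_0$, using Theorem \ref{thm:MaassW}(4)) and $\omega$ an appropriate weight-$1$ period kernel in the spirit of \cite{LewisZagier}. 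Matching this integral with the $q$-series definition of $f_W$ at rationals uses finite-sum formulas for $W(q)$ and $\cS[W](q)$ at roots of unity analogous to \eqref{eqn:sigma_rootsOfUnity}--\eqref{eqn:sigma*_rootsOfUnity}, to be derived from \eqref{eq:Wq_definition} and \eqref{eqn:W2star} by $q$-binomial manipulations that terminate since $(q;q^2)_n$, $(-1;q^2)_n$, and $(-q^2;q^2)_n$ vanish for large $n$ at such points, combined with the leaking identity $W(q^{-1}) = -\cS[W](q)$ at roots of unity, a direct consequence of the infinite-order vanishing of the ghost $\cG[W]$ established in Theorem \ref{thm:W2}.

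Given the integral representation, the period function $h_{T'}(x)$ becomes the integral of $\varphi_{0,W}$ along the closed contour $C_x - (T')^{-1} C_{T'x}$; by $\nu_0$-equivariance of $\varphi_{0,W}$ under $\Gamma_0(4)$ this closed path lies in the upper half plane, and by the vanishing of $\varphi_{0,W}$ at $\infty$ and $0$ the integral defines a real-analytic function of $x$ except where the contour is pinched against a cusp at which $\varphi_{0,W}$ does \emph{not} vanish---namely $\Gamma_0(4)$-translates of $1/2$ by Theorem \ref{thm:MaassW}(4), contributing only finitely many bad $x$ for each fixed $\gamma$. The main obstacle is the derivation of the root-of-unity finite-sum formulas for $W$ and $\cS[W]$ and the rigorous identification of the integral representation with the $q$-series; these finite-sum formulas do not follow immediately from \eqref{eq:Wq_definition} or \eqref{eqn:W2star} and will require Bailey-pair or Watson-type transformations specific to the structure of $W$ and $\cS[W]$, paralleling but extending the $\sigma$-case of \cite{ADH, cohen}. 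The non-vanishing of $\varphi_{0,W}$ at $1/2$ is then not an obstruction but precisely the geometric source of the singular set of $h_\gamma$, exhibiting the "new feature" linking cuspidality to quantum modularity emphasized in Section \ref{sec:Intro}.
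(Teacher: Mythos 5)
Your overall strategy---reduce to a set of generators of $\Gamma_0(4)$, represent $f_W$ by a Lewis--Zagier period integral of $\varphi_{0,W}$ against a kernel of type $R_x(\tau)^{1/2}$, and read off the regularity of $h_\gamma$ from a contour whose endpoints are cusps---is the same as the paper's (which uses the generators $A=\smatr{1}{1}{0}{1}$, $B=\smatr{3}{-1}{4}{-1}$, $-I$, together with Proposition \ref{prop:termbyterm}). But two steps as written would fail. First, the translation generator: you compute $f_W(x+1)=e^{-2\pi i/8}f_W(x)$, leave $h_T=\bigl(\nu_1(T)-e^{-2\pi i/8}\bigr)f_W$ with a possibly nonzero constant, and claim its regularity ``reduces to that of $f_W$ itself.'' That reduction is vacuous: $f_W$ is not $C^\infty$ or real-analytic on $\R$ minus a finite set---it is the boundary value, at a dense set of rationals, of two \emph{different} holomorphic functions on $\bH$ and $\bH^-$, and only the cocycle combinations $h_\gamma$ extend nicely to $\R$. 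The argument works only if $h_T\equiv 0$, i.e.\ if the multiplier $\nu_1(T)$ matches exactly the exponential factor produced by the $q$-expansion of $q^{-1/8}W(q)$ (and of $q^{-1/8}\cS[W](q^{-1})$ on $S_0$); this identity must be verified, not replaced by an appeal to the regularity of $f_W$.

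Second, your identification of the singular set of $h_{T'}$ is not correct. For $\gamma\in\Gamma_0(4)$ and $x\in S_\infty\cup S_0$, the closed contour $C_x-\gamma^{-1}C_{\gamma x}$ collapses to a fixed path joining the cusps $\gamma^{-1}\infty$ (resp.\ $\gamma^{-1}0$) and $\infty$ (resp.\ $0$); no $\Gamma_0(4)$-translate of $1/2$ ever enters. The failure of real-analyticity of $h_\gamma$ occurs at the real endpoints $\gamma^{-1}\infty$, $\gamma^{-1}0$ of that path, where the kernel degenerates as $x$ approaches the contour---and this happens even for cuspidal forms (in the $\sigma$ example the period function is $C^\infty$ but not real-analytic at $-1/2$); in the paper's computation $h_B$ is singular precisely at $B^{-1}\infty=1/4$. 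The non-vanishing of $\varphi_{0,W}$ at the cusp $1/2$ is the reason $f_W$ cannot even be \emph{defined} on $S_{1/2}$, not the source of singularities of $h_\gamma$. Relatedly, the ``finite-sum formulas at roots of unity'' that you flag as the main obstacle are not needed: the defining series for $W(q)$ and $\cS[W](q^{-1})$ already terminate at $q=e^{2\pi i x}$ for $x\in S_\infty$ (resp.\ $x\in S_0$), because $(-1;q^2)_n$ (resp.\ $(q;q^2)_n$) vanishes for all large $n$ at such $q$ while the corresponding denominators do not vanish---this is exactly why the domain of $f_W$ is restricted to $S_\infty\cup S_0$, and the paper expects that no analogues of \eqref{eqn:sigma_rootsOfUnity}--\eqref{eqn:sigma*_rootsOfUnity} valid on all of $\bQ$ exist.
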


\begin{remark}
The set $S_\infty$ consists of fractions $\tfrac{a}{4b}$ with $a, b \in \bZ$ and $\gcd(a, 4b) = 1$. 
Then it is not hard to see that the $q$-hypergeometric expression of $W(q)$ in \eqref{eq:Wq_definition} makes sense when $q = e^{2\pi i x}$ with $x \in S_\infty$. 
Similarly, $\cS[W](q^{-1})$ can be evaluated at $q = e^{2\pi i x}$ when $x \in S_0$. 
Thus, $f_W(x)$ is defined on $S = S_\infty \cup S_0 \subset \bQ$. 
\end{remark}

It is unlikely that there exist expressions for $W(q)$ and
$\cS[W](q)$, similar to those for $\sigma(q)$ and $\sigma^*(q)$ in
\eqref{eqn:sigma_rootsOfUnity} and \eqref{eqn:sigma*_rootsOfUnity},
which exist at every root of unity.  The reason is that the Maass
waveform $\varphi_{0, W}(z)$ is not a cusp form and it is not clear
how to define $f_W(x)$ when $x \in \bQ - S = S_{1/2}$.

In Section \ref{sec:background} we collect some results on
$q$-hypergeometric series and Hecke $L$-functions which are important
for our study.  Section \ref{sec:realquadratic} contains the proof of
Theorems \ref{thm:MaassW}, \ref{thm:W2},and  \ref{thm:realquadratic_quantum}.

In Section \ref{sec:Other} we discuss ``renormalization'' applied to a
number of other examples which exhibit interesting structure, but do
not seem to fit into any known theory of non-holomorphic modular
forms. It would be interesting if these examples were explored
further.

\section{Preliminaries}\label{sec:background}

\subsection{$q$-hypergeometric series}\label{sec:q-seriesBack}

This subsection collects some results dealing with $q$-hypergeometric series. 
Throughout we adopt Fine's \cite{fine} notation for the basic hypergeometric series 
$$F(a, b;t) = F(a, b; t:q):= \sum_{n=0}^\infty \frac{(aq;q)_n}{(bq;q)_n} t^n.$$
The following identity is (6.3) of \cite{fine}
\begin{equation}\label{eqn:fine6.3}
F(a, b;t) = \frac{1-b}{1-t} F\( \frac{at}{b}, t;b\).
\end{equation}

Finally, the following identity is due to  Ramanujan (Entry 1.7.2 of Ramanujan's Lost notebook book  volume II \cite{ABII}):
\begin{equation}\label{eqn:entry1.7.2}
\sum_{n=0}^\infty  \frac{(-aq/b)_n b^n (-1)^n q^{\frac{1}{2}n(n+1)}}{(-b;q)_{n+1}} = 
\sum_{n=0}^\infty \frac{(-b)^n (-q;q)_n (-aq/b;q)_n}{(aq;q^2)_{n+1}}.
\end{equation}

The next two theorems will be used to carry out the renormalization in the next three sections of this paper. 
The theorems are due to Andrews, Jim\'enez-Urroz, and Ono \cite{AJO}.
\begin{theorem}[Theorem 1 of \cite{AJO}]\label{thm:AJO}
\begin{align*}
\sum_{n=0}^\infty & \( \frac{(t;q)_\infty}{(a;q)_\infty} - \frac{(t;q)_n}{(a;q)_n}\) 
= \sum_{n=1}^\infty \frac{(q/a;q)_n}{(q/t;q)_n} \(\frac{a}{t}\)^n \\
& \hspace{.7in} + \frac{(t;q)_\infty}{(a;q)_\infty} \( \sum_{n=1}^\infty \frac{q^n}{1-q^n} + \sum_{n=1}^\infty 
\frac{q^nt^{-1}}{1-q^nt^{-1}} - \sum_{n=0}^\infty \frac{tq^n}{1-tq^n} - \sum_{n=0}^\infty \frac{at^{-1}q^n}{1-at^{-1}q^n}\).
\end{align*}
\end{theorem}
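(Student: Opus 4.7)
The plan is to compute the (conditionally convergent) sum on the left by Abel regularization. Introduce
\[
L(z) := \sum_{n=0}^\infty z^n\left(\frac{(t;q)_\infty}{(a;q)_\infty} - \frac{(t;q)_n}{(a;q)_n}\right), \qquad |z|<1,
\]
so that the desired left-hand side is $\lim_{z\to 1^-}L(z)$. The first piece of $L(z)$ is the geometric series $\frac{(t;q)_\infty}{(a;q)_\infty}\cdot\frac{1}{1-z}$. The second piece is the basic hypergeometric series ${}_{2}\phi_{1}(t,q;a;z)$, and Heine's transformation — a consequence of iterated use of Fine's identity \eqref{eqn:fine6.3} — converts it to
\[
\frac{(q;q)_\infty (tz;q)_\infty}{(a;q)_\infty (z;q)_\infty}\sum_{m=0}^\infty \frac{(a/q;q)_m (z;q)_m}{(tz;q)_m (q;q)_m}\, q^m.
\]

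Next I expand both pieces to order $(1-z)^0$ near $z=1$. The factor $(z;q)_\infty = (1-z)(qz;q)_\infty$ supplies the simple pole, and logarithmic differentiation gives the Taylor expansions
\[
\frac{(q;q)_\infty}{(qz;q)_\infty} = 1 + (z-1)\sum_{k=1}^\infty \frac{q^k}{1-q^k} + O((z-1)^2), \qquad \frac{(tz;q)_\infty}{(t;q)_\infty} = 1 - (z-1)\sum_{k=0}^\infty \frac{tq^k}{1-tq^k} + O((z-1)^2).
\]
Moreover, because $(z;q)_m$ vanishes at $z=1$ for every $m\geq 1$, the inner hypergeometric factor equals $1$ at $z=1$ and has first-order expansion $1 + (1-z)\sum_{m=1}^\infty\frac{(a/q;q)_m\, q^m}{(t;q)_m(1-q^m)} + O((1-z)^2)$. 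Assembling these expansions, the two $\frac{1}{1-z}$ poles in $L(z)$ cancel and yield the finite limit
\[
\text{LHS} = \frac{(t;q)_\infty}{(a;q)_\infty}\left(\sum_{k=1}^\infty\frac{q^k}{1-q^k} - \sum_{k=0}^\infty\frac{tq^k}{1-tq^k} - \sum_{m=1}^\infty\frac{(a/q;q)_m\, q^m}{(t;q)_m(1-q^m)}\right).
\]

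The remaining task is to identify the residual sum $\sum_{m\ge1}\frac{(a/q;q)_m\, q^m}{(t;q)_m(1-q^m)}$ with the combination consisting of (minus) the Fine-type series $\sum_{n\geq 1}\frac{(q/a;q)_n(a/t)^n}{(q/t;q)_n}$ — rescaled by $\frac{(a;q)_\infty}{(t;q)_\infty}$ — and the two remaining logarithmic-derivative sums $\sum q^k t^{-1}/(1-q^kt^{-1})$ and $\sum(a/t)q^k/(1-(a/t)q^k)$ appearing in the statement. This is itself a secondary "sum of tails" identity, now for the pair $(a/q;q)_m$, $(t;q)_m$, and can be established by a second application of the same Abel-regularization recipe, combined with the $q$-binomial theorem $\sum_{n\geq 0}(a/t;q)_n t^n/(q;q)_n = (a;q)_\infty/(t;q)_\infty$ to recognize the constant contribution. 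The main obstacle is the careful bookkeeping of subleading terms in the two expansions around $z=1$ and ensuring that no contribution to the $(1-z)^0$ coefficient is missed; a useful sanity check is the classical limit $q\to 0$, in which all the series collapse to elementary rational functions and the identity reduces to an elementary algebraic computation.
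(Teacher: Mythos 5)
The paper offers no proof of this statement; it is imported verbatim as Theorem~1 of \cite{AJO}, so there is nothing internal to compare against. Your Abel-regularization framework is sound and the first two-thirds of the computation is correct: the series $\sum_n\bigl(\tfrac{(t;q)_\infty}{(a;q)_\infty}-\tfrac{(t;q)_n}{(a;q)_n}\bigr)$ converges absolutely, Heine's transformation applies to $\sum_n\tfrac{(t;q)_n}{(a;q)_n}z^n={}_2\phi_1(t,q;a;q,z)$ as you state, the three first-order expansions at $z=1$ (including the derivative of $(z;q)_m$, which contributes $-(q;q)_{m-1}$) are all correct, and the poles cancel. I checked your intermediate formula
$\mathrm{LHS}=\tfrac{(t;q)_\infty}{(a;q)_\infty}\bigl(\sum_{k\ge1}\tfrac{q^k}{1-q^k}-\sum_{k\ge0}\tfrac{tq^k}{1-tq^k}-\sum_{m\ge1}\tfrac{(a/q;q)_m\,q^m}{(t;q)_m(1-q^m)}\bigr)$
in the degenerate case $q\to0$, where both it and the theorem reduce to $\tfrac{a-t}{1-a}$.

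The genuine gap is the final step. What remains to be shown is, explicitly,
\begin{equation*}
\sum_{m\ge1}\frac{(a/q;q)_m}{(t;q)_m}\,\frac{q^m}{1-q^m}
=\sum_{n\ge0}\frac{at^{-1}q^n}{1-at^{-1}q^n}-\sum_{n\ge1}\frac{q^nt^{-1}}{1-q^nt^{-1}}-\frac{(a;q)_\infty}{(t;q)_\infty}\sum_{n\ge1}\frac{(q/a;q)_n}{(q/t;q)_n}\left(\frac{a}{t}\right)^n,
\end{equation*}
and this is a $q$-series identity of essentially the same depth as the theorem itself, not a routine consequence of ``the same recipe plus the $q$-binomial theorem.'' Your left-hand side is a Lambert-type sum in the base $q$ only, whereas the right-hand side involves Lambert series in the variables $t^{-1}$ and $at^{-1}$ and a Fine-type series in the argument $a/t$; none of these variables is visible in $\sum_m\tfrac{(a/q;q)_m q^m}{(t;q)_m(1-q^m)}$, so some further transformation that changes the argument to $a/t$ (e.g.\ one of the other two Heine iterates, Fine's identity \eqref{eqn:fine6.3}, or the Rogers--Fine identity, which is how \cite{AJO} actually route the argument) is unavoidable. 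You should either carry out that transformation \emph{before} expanding at $z=1$, so that the series in $a/t$ and the extra Lambert sums emerge directly from the expansion, or prove the displayed residual identity separately; as written, the proposal reduces the theorem to an unproven identity of comparable difficulty.
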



\subsection{Hecke Characters and Hecke $L$-functions}\label{sec:heckeBack}


This subsection recalls some standard facts about Hecke characters of number fields and their associated Hecke $L$-functions. Throughout we adopt Neukirch's \cite{Neukirch} notation. For our needs in the paper, we restrict our discussion to real quadratic number fields.

Let $K$ be a real quadratic field and $\cO$ its ring of integers. Then $K\otimes_\bQ\bR\cong\bR^2$ via the isomorphism $\alpha\otimes1\leftrightarrow(\alpha_1:=\sigma_1(\alpha),\alpha_2:=\sigma_2(\alpha))$ for every $\alpha\in K$, where $\sigma_1$ and $\sigma_2$ are the real embeddings of $K$. Put $|\alpha|:=(|\alpha_1|,|\alpha_2|)$ for $\alpha\in K$ and $N(\beta_1,\beta_2):=\beta_1\beta_2$ for $\beta_1,\beta_2\in\bR$.

Let $\fm$ be an integral ideal of $K$. We write $J^\fm$ for the group of fractional ideals relatively prime to $\fm$. A homomorphism 
$$
\chi: J^\fm\to S^1 :=\{z\in\bC:|z|=1\}
$$ 
is called a \emph{character mod $\fm$}.

\begin{definition}
\begin{enumerate}
\item[${\rm (i)}$] A \emph{Hecke character mod $\fm$} is a character $\chi$ mod $\fm$ for which there exists a pair of characters
$$
\chi_f:(\cO/\fm)^*\to S^1, \quad \chi_\infty:(K\otimes\bR)^*\to S^1
$$
such that $\chi_\infty$ is continuous (with respect to the real topologies on both sides) and 
$$
\chi((\alpha))=\chi_f(\alpha)\chi_\infty^{-1}(\alpha\otimes1)
$$
for every $\alpha\in\cO$ relatively prime to $\fm$. We call $\chi_f$ (resp. $\chi_\infty$) the \emph{finite (resp. infinite) component} of $\chi$.
\item[${\rm (ii)}$] With the same notation as in ${\rm (i)}$, the continuity of $\chi_\infty$ imply that 
$$
\chi_\infty^{-1}(\alpha\otimes1)=N(\alpha^p|\alpha|^{p+iq})
$$
for every $\alpha\in K^*$, where $p=(\eps_1,\eps_2)$ with $\eps_i\in\{0,1\}$ and $q\in\bR^2$. We say that \emph{$\chi$ has infinity type $(++)$ (resp. $(+-)$, resp. $(--)$)} if $\eps_1=\eps_2=0$ (resp. $\{\eps_1,\eps_2\}=\{0,1\}$, resp. $\eps_1=\eps_2=1$).
\item[${\rm (iii)}$] A Hecke character $\chi$ mod $\fm$ as in ${\rm (i)}$ is \emph{primitive} if $\chi_f$ does not factor through $(\cO/\fm')^*$ for any proper divisor $\fm'$ of $\fm$. In this case, $\fm$ is called the \emph{conductor} of $\chi$.
\end{enumerate}
\end{definition}

Now fix an integral ideal $\ff$ and a primitive Hecke character $\chi$ of conductor $\ff$. Suppose 
$$
\chi_\infty^{-1}(\alpha\otimes1)=N(\alpha^p|\alpha|^{p+iq})
$$
for every $\alpha\in K^*$. Put 
\begin{align*}
L_X(s_1,s_2) &= \pi^{-s_1/2}\Gamma(s_1/2)\cdot \pi^{-s_2/2}\Gamma(s_2/2) & (s_1,s_2\in\bC)  \\
L_\infty(\chi,s) &= L_X((s,s)+p-iq) & (s\in\bC) .
\end{align*}

\begin{definition}
The \emph{completed Hecke $L$-function associated to (primitive) $\chi$} is the function
$$
\Lambda(\chi,s):=\( |d_K| \cdot N\ff  \)^{s/2}L_\infty(\chi,s)\cdot
L(\chi, s)
$$
where 
$$
L(\chi, s) =  \sum_{\fa\subset \cO} \frac{\chi(\fa)}{(\cN\fa)^{s}},
$$
$\cN\fa$ denotes the ideal norm of $\fa$ and $d_K$ is the discriminant of $K$.
\end{definition}

\begin{theorem}
Suppose $\chi$ is a primitive Hecke character mod $\ff$. Then $\Lambda(\chi,s)$ converges absolutely (uniformly on compacta) for $\Re(s)>1$. It has a meromorphic continuation to $s\in\bC$ with a functional equation
$$
\Lambda(\chi,s)=W(\chi)\cdot \Lambda(\overline{\chi},1-s)
$$
where $W(\chi)$ is the root number associated to $\chi$ which (can be computed explicitly using Gauss sum and) satisfies $|W(\chi)|=1$. If $\ff\neq1$ or $p\neq0$, then $\Lambda(\chi,s)$ is an entire function of $s$.
\end{theorem}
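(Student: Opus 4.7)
The plan is to follow Hecke's classical approach via theta series and Mellin transforms. Absolute and locally uniform convergence of $L(\chi,s)=\sum_\fa \chi(\fa)(\cN\fa)^{-s}$ for $\Re(s)>1$ is immediate from $|\chi(\fa)|\le 1$ and comparison with the Dedekind zeta function $\zeta_K(s)$; the factors $L_\infty(\chi,s)$ and $(|d_K|\cdot\cN\ff)^{s/2}$ are entire in $s$, so the same conclusion holds for $\Lambda(\chi,s)$. This settles the first assertion.

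For the continuation and functional equation, I would decompose the sum over integral ideals according to classes of the ray class group modulo $\ff$: for each class $\cC$, fix an integral ideal $\fb$ with $[\fb]=\cC^{-1}$ and parametrize the ideals in $\cC$ as $(\alpha)\fb^{-1}$ with $\alpha\in\fb$ in a fixed residue class $\alpha_0$ mod $\fb\ff$, modulo the action of the totally positive units congruent to $1$ modulo $\ff$. Substituting this parametrization into $\Lambda(\chi,s)$ and unfolding the archimedean Gamma factors produces a Mellin transform representation
\[
\Lambda(\chi,s) \;=\; \int_0^\infty \theta_\chi(t)\, t^{s/2}\,\frac{dt}{t},
\]
where $\theta_\chi$ is the Hecke theta series
\[
\theta_\chi(t) \;=\; \sideset{}{'}\sum_{\alpha}\chi_f(\alpha)\,\alpha^p\,|\alpha|^{iq}\,\exp\bigl(-\pi t\, N(\alpha^2)\bigr),
\]
the sum taken over the relevant cosets in $\fb$ modulo the unit action, with an analogous expression for $\overline\chi$.

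The heart of the argument is the theta transformation $\theta_\chi(1/t) = W(\chi)\, t^{\kappa}\, \theta_{\overline\chi}(t)$, for a weight $\kappa$ determined by $p$, obtained by Poisson summation on $\fb\subset K\otimes\bR\cong\bR^2$. The Fourier transforms of $x\mapsto x^\eps e^{-\pi t x^2}$ for $\eps\in\{0,1\}$ are standard Gaussian calculations, and summing the finite twist $\chi_f(\alpha)$ over a complete set of residues produces a Gauss sum $\tau(\chi_f)$; the conductor factor $(\cN\ff)^{s/2}$ built into $\Lambda$ is exactly what is needed to cancel the $\cN\ff$ arising from the dual lattice $(\fb\ff\fd)^{-1}$, leaving a root number $W(\chi)$ of modulus $1$. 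Splitting the Mellin integral at $t=1$ and applying the transformation on $(0,1]$ then writes $\Lambda(\chi,s)$ as a sum of two integrals over $[1,\infty)$ convergent for every $s\in\bC$, plus a contribution from the $\alpha=0$ term of $\theta_\chi$; this simultaneously yields meromorphic continuation and the functional equation. That constant contribution vanishes when $\ff\ne 1$ (since the residue $\alpha_0$ is then nonzero mod $\fb\ff$, so $\alpha=0$ is excluded from the parametrization) and when $p\ne 0$ (since the factor $\alpha^p$ kills the $\alpha=0$ term), giving entirety in either case.

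The principal obstacle is the Poisson summation bookkeeping: choosing a fundamental domain for the totally positive units $\equiv 1\pmod\ff$ acting on $\fb$, identifying the dual lattice as $(\fb\ff\fd)^{-1}$ with $\fd=\fd_{K/\bQ}$ the different, and assembling the Gauss sum $\tau(\chi_f)$ with the correct normalization so that $|W(\chi)|=1$ falls out manifestly from the discriminant--conductor prefactor. Once this setup is in place, the remaining steps---absolute convergence of the tail integrals in $s$, analyticity of the resulting expression, and agreement of the symmetric formula for $\Lambda(\chi,s)$ with that obtained for $\Lambda(\overline\chi,1-s)$---are routine verifications.
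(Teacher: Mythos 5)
The paper does not actually prove this theorem: it is stated in the preliminaries as a recalled standard fact, with Neukirch \cite{Neukirch} cited for the notation and for the computation of the root number. So there is no in-paper argument to compare against; your sketch is the classical Hecke proof (ray-class decomposition, theta series, Poisson summation) that the cited source carries out, and its overall architecture --- Gauss sum $\tau(\chi_f)$, dual lattice $(\fb\ff\fd)^{-1}$, and the vanishing of the constant term when $\ff\neq 1$ or $p\neq 0$ --- is the right one.

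There is, however, a concrete flaw in your displayed reduction. For a real quadratic field the archimedean factor is $L_X((s,s)+p-iq)$, a \emph{product of two} Gamma factors $\pi^{-s_1/2}\Gamma(s_1/2)\cdot\pi^{-s_2/2}\Gamma(s_2/2)$, one per real embedding. This forces a two-variable theta function
$$
\theta_\chi(t_1,t_2)=\sum_{\alpha}\chi_f(\alpha)\,\alpha^p\,|\alpha|^{iq}\,\exp\bigl(-\pi(t_1\alpha_1^2+t_2\alpha_2^2)\bigr)
$$
on $(\bR^+)^2$, with a Mellin transform in both variables; your one-variable integral $\int_0^\infty\theta_\chi(t)\,t^{s/2}\,dt/t$ against $\exp(-\pi t\,N(\alpha^2))=\exp(-\pi t\,N(\alpha)^2)$ yields a single $\Gamma(s/2)$ and so does not equal $\Lambda(\chi,s)$ as defined. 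Worse, $\alpha\mapsto e^{-\pi t N(\alpha)^2}$ is not a Gaussian on $K\otimes\bR\cong\bR^2$, the full lattice sum $\sum_{\alpha\in\fb}e^{-\pi tN(\alpha)^2}$ diverges (there are infinitely many lattice points of bounded norm, since the unit group is infinite), and the quotient of $\fb$ by the unit action is not a lattice, so Poisson summation applies to neither version of the sum you wrote. The standard repair is exactly what Hecke and Neukirch do: sum the two-variable Gaussian over the \emph{full} coset of $\fb$ (no unit quotient), apply Poisson there, and instead fold the action of the totally positive units $\equiv 1\pmod{\ff}$ into the domain of the $(t_1,t_2)$-integration, restricting to a fundamental domain in $(\bR^+)^2$ for that action. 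With that correction the rest of your outline --- splitting at the unit-norm locus, the Gauss-sum root number of modulus one, and entirety when $\ff\neq 1$ or $p\neq 0$ --- goes through as you describe.
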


\begin{remark}
For the explicit computation of the root number, see for example \cite[Chapter ${\rm VII}$, Section $8$]{Neukirch}.
\end{remark}

Now suppose $\chi_f$ is trivial on $\cO^*$, $q = (0, 0)$ and $\chi$ has infinity type $(++)$. Then $\chi$ has finite order and is a character of the ray class group of $K$ of modulus $\ff$.
By class field theory, it is a character of $\Gal(K_\ff/K)$, where $K_\ff$ is the ray class field of $K$ of modulus $\ff$. 
Let $\overline{\bQ}$ be a fixed algebraic closure $\bQ$ and $M \subset \overline{\bQ}$ be the normal closure of $K_\ff$ over $\bQ$.
Denote the induced representation of $\chi: \Gal(K_\ff/ K) \longrightarrow \bC^\times$ to $\Gal(\overline{\bQ}/\bQ)$ by
$$
\rho_\chi: \Gal(\overline{\bQ}/\bQ) \longrightarrow \mathrm{GL}_2(\bC),
$$ 
whose kernel contains $\Gal(\overline{\bQ}/M)$. 
Let $L(\rho_\chi, s)$ be the Artin L-function associated to $\rho_\chi$. 
Then we have
$$
L(\chi, s) = L(\rho_\chi, s).
$$
Since the infinity type of $\chi$ is $(+ +)$, we deduce that $\rho_\chi$ is an even representation, i.e. $\det(\rho_\chi(c)) = 1$ with $c \in \Gal(\overline{\bQ}/\bQ)$ complex conjugation. Thus, it can be associated a Maass waveform on $\Gamma_0(N)$ of eigenvalue $1/4$ by Langlands' theory of solvable base change \cite{Langlands}, where $N$ is the conductor $\rho_\chi$. 
If $\rho_\chi$ is irreducible (resp.\ reducible), then the associated Maass waveform vanishes (resp.\ does not vanish) at all the cusps of $\Gamma_0(N)$. 
In that case, we call it a Maass cusp form (resp.\ Maass-Eisenstein series).

\subsection{Period functions of Maass wave forms}\label{sec:periodFunction}

This section contains results on period functions of Maass wave forms.  The calculations are used to establish the quantum modularity of the function $f_W$ in Theorem \ref{thm:realquadratic_quantum}.
The results are essentially contained in \cite{LewisZagier} (see also \cite{BLZ1, BLZ2} for much more about period functions of Maass wave forms). However, we were unable to locate the precise proposition we needed.
So we have included the details.

Throughout this section, let 
$u(z)$ be a Maass form of spectral parameter $s$, $s = 1/2$, i.e.\ it is an eigenfunction of the Laplacian with eigenvalue $s(1-s) = \frac{1}{4}$.
Write its Fourier expansion in the form
$$
\uu (z) = \sqrt{y} \sum_{n \neq 0} \cn K_{s - 1/2}(2 \pi |n| y ) e^{2 \pi i n x}.
$$
Here $z = x + iy$ and $K_\nu(t)$ is the modified Bessel function satisfying the differential equation
$$
t^2 \frac{d^2 f}{dt^2} + t \frac{df}{dt} - (t^2 + \nu^2) f = 0.
$$
Notice that $K_\nu(t) = K_{- \nu}(t)$. At $\nu = 1/2$, the modified Bessel function can be written as
$$
K_{1/2}(t) = \sqrt{\frac{\pi}{2}} t^{-1/2} e^{-t}.
$$
Furthermore, the first derivative of $K_\nu(t)$ is $-(K_{\nu - 1}(t) + K_{\nu + 1}(t))/2$. In particular, $K'_0(t) = -K_1(t)$. 
Define the real-analytic function $R_\xi(z)$ to be
$$
R_\xi(z) := \frac{y}{(x - \xi)^2 + y^2} = \frac{i}{2} \left( \frac{1}{z - \xi} - \frac{1}{\overline{z} - \xi} \right).
$$
Then the function $(R_\xi)^s$ is an eigenfunction of the Laplacian of the same eigenvalue, $s(1-s)$. 
For two real-analytic functions $f(z), g(z)$ on the upper half plane, define their complex Green's form to be
$$
[f, g] := \frac{\partial f}{\partial z} g dz + f \frac{\partial g}{\partial \overline{z}} d\overline{z}.
$$
\begin{proposition}\label{prop:termbyterm}
In the notation above, for $z\in \H$ 
$$\sum_{n>0} A_n e^{2\pi i n z} = -\frac{2}{\pi}  \int_z^{i\infty} [u(\tau), R_z(\tau)^{1/2}]$$
and for 
$z \in \H^-$ 
$$\sum_{n<0} A_n e^{2\pi i n z} = - \frac{2}{\pi} \int_{\overline{z}}^{i\infty} [R_z(\tau)^{1/2}, u(\tau)].$$
\end{proposition}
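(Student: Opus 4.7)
My approach is to exploit the closedness of the Green's form $[\,\cdot\,,\,\cdot\,]$ and then evaluate the integral term-by-term on the Fourier side. The statement is symmetric under conjugation $z \mapsto \overline{z}$, so I will focus on the first identity; the second follows from the same argument with the roles of $f$ and $g$ swapped and the path reflected.

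\textbf{Step 1: Closedness.} The function $R_z(\tau)^{s}$ is an eigenfunction of the hyperbolic Laplacian $\Delta_\tau$ with eigenvalue $s(1-s)$, as already noted in the setup; here $s=1/2$, matching the eigenvalue of $u$. A direct computation using $\Delta = -4y^2\,\partial_z\partial_{\overline{z}}$ shows that whenever $\Delta f = \lambda f$ and $\Delta g = \lambda g$ for the same $\lambda$, the $1$-form $[f,g] = f_z\,g\,dz + f\,g_{\overline{z}}\,d\overline{z}$ satisfies $d[f,g]=0$. Hence the integrals in the proposition only depend on the homotopy class of the chosen path in $\H \setminus \{z\}$.

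\textbf{Step 2: Choice of contour and Fourier expansion.} Fix $z = x+iy \in \H$ and deform the path of integration to the vertical ray $\tau = x + iv$, $v \ge y$. On this ray one computes $\tau - z = i(v-y)$ and $\overline{\tau}-z = -i(v+y)$, hence
\[
R_z(\tau) \;=\; \frac{i}{2}\left(\frac{1}{i(v-y)} - \frac{1}{-i(v+y)}\right) \;=\; \frac{v}{v^2-y^2},
\]
which is positive for $v>y$, so $R_z(\tau)^{1/2}$ admits a canonical real-analytic branch along this ray. Substitute the Fourier expansion of $u$ and interchange sum and integral (justified by the exponential decay of $K_0(2\pi|n|v)$ as $v\to\infty$). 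The $n=0$ constant term contributes nothing because $R_z(\tau)^{1/2}$ decays like $v^{-1/2}$ at $\infty$ while the closed $1$-form structure kills the constant piece (or one simply verifies that $u$ has no constant term in the cases of interest).

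\textbf{Step 3: Evaluation of a single Fourier mode.} The heart of the proof is to show that for $n > 0$,
\[
\int_z^{i\infty}\!\!\bigl[\sqrt{v}\,K_0(2\pi n v)e^{2\pi i n x},\, R_z(\tau)^{1/2}\bigr]
\;=\; -\tfrac{\pi}{2}\,e^{2\pi i n z},
\]
while the same integral vanishes for $n<0$. Parameterize by $v$, use $dz = i\,dv$, $d\overline{z} = -i\,dv$ on the ray, and reduce the integrand to a combination of $K_0(2\pi|n|v)$ and $K_1(2\pi|n|v)$ (using $K_0' = -K_1$). The resulting one-variable integral can be evaluated via the integral representation
\[
K_0(a) = \int_1^\infty \frac{e^{-au}}{\sqrt{u^2-1}}\,du \qquad (a>0),
\]
after the substitution $u = v/y$, together with the identity $\int_1^\infty e^{-av}v^{-1/2}(v^2-y^2)^{-1/2}\cdots dv$ yielding $e^{-2\pi n y}$. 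The factor $e^{2\pi i n x}\cdot e^{-2\pi n y} = e^{2\pi i n z}$ then appears. The vanishing for $n<0$ is the key asymmetry: it reflects the choice of contour going to $+i\infty$, which forces the relevant Laplace-type transform to pick out the positive-frequency half only. For the second formula, the contour from $\overline{z}$ to $i\infty$ instead isolates $n<0$ by the same mechanism applied to $\overline{u}$.

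\textbf{Main obstacle.} Step 3, the explicit Bessel integral, is where the calculation concentrates. The delicacy is twofold: verifying absolute convergence and interchange of sum/integral at the endpoint $\tau = z$ (where $R_z(\tau)^{1/2}$ has a mild singularity but $[u, R_z^{1/2}]$ remains integrable), and tracking the branch of $R_z(\tau)^{1/2}$ carefully enough to see the asymmetry between $n>0$ and $n<0$. Once these are handled, the proposition follows by summing over Fourier modes.
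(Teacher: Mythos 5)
Your proposal is correct and follows essentially the same route as the paper: reduce to a vertical ray (the paper uses holomorphy of the integral in $\xi$ to put $\xi$ on the imaginary axis, you use closedness of the Green's form to deform the contour — both standard Lewis--Zagier facts), expand $u$ in Fourier modes, interchange sum and integral, and evaluate each mode as a Bessel integral, with the stated single-mode values $-\tfrac{\pi}{2}e^{2\pi i n z}$ for $n>0$ and $0$ for $n<0$ agreeing with the paper's computation of $g_1+g_2+g_3$ via the Bateman $K$-transform. The only thing you leave as a sketch is precisely that one-variable Bessel evaluation, which your $K_0$ integral representation (equivalent to the paper's transform formula with $\mu=\tfrac12$, $\nu=0$) does carry out.
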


\begin{proof} 
Define 
\begin{align*}
f(\xi) &:= \sum_{n \ge 1} \cn e^{2 \pi i n \xi}, \\
f_1(\xi) &:= \int^{i \infty}_{\xi} [ u, R^s_\xi] (z).
\end{align*}
We want to show that the functions $f$ equals $f_1$ up to a constant.

First, it is not hard to see that $f_1(\xi)$ is holomorphic in $\xi$ (see pg 212 of \cite{LewisZagier}).
So it is enough to consider $\xi = i y_0$ with $y_0 \in \mathbb{R}^+$. 
In this case, we can write 
\begin{align*}
f_1(\xi) &= 
\int^{i\infty}_{i y_0} \frac{1}{2} \left( \frac{\partial \uu}{\partial x} - i \frac{\partial \uu}{\partial y} \right) \left( \frac{y}{y^2 - y_0^2}\right)^{1/2} dz 
+ 
\uu \frac{\partial }{\partial \overline{z}} (R^{1/2}_\xi) d \overline{z} \\
&= 
 \int^{i\infty}_{i y_0} \frac{1}{2} \left( \frac{\partial \uu}{\partial x} - i \frac{\partial \uu}{\partial y} \right) \left( \frac{y}{y^2 - y_0^2}\right)^{1/2} dz 
+
\uu  \frac{-i}{4\sqrt{y}} \frac{(y- y_0)^2}{(y^2 - y^2_0)^{3/2}} d \overline{z} \\
&= 
i \int^{\infty}_{y_0} \frac{1}{2} \left( \frac{\partial \uu}{\partial x} - i \frac{\partial \uu}{\partial y} \right) \left( \frac{y}{y^2 - y_0^2}\right)^{1/2} dy
+
\uu  \frac{i}{4\sqrt{y}} \frac{(y- y_0)^2}{(y^2 - y^2_0)^{3/2}} dy \\
&= i(
g_1(y_0) + g_2(y_0) + g_3(y_0)),
\end{align*}
where 
\begin{align*}
g_1(y_0) &:= \frac{1}{2} \int^\infty_{y_0} \frac{\partial \uu}{\partial x} \mid_{x = 0} \cdot \left( \frac{y}{y^2 - y^2_0} \right)^{1/2}, \\
g_2(y_0) &:= \frac{-i}{2} \int^\infty_{y_0} \frac{\partial \uu}{\partial y} \mid_{x = 0} \cdot \left( \frac{y}{y^2 - y^2_0}\right)^{1/2}, \\
g_3(y_0) &:= \frac{i}{4}  \int^\infty_{y_0} \uu \frac{(y - y_0)^2}{\sqrt{y} (y^2 - y^2_0)^{3/2}} dy.
\end{align*}
After substituting in the Fourier expansion of $\uu(z)$ into the integrals defining $g_i(y_0)$, we could interchange the integral and summation and evaluate them.
Equation  (13) on page 129 of \cite{bateman} states:
$\mathrm{Re}(\mu) > -1$ and $\mathrm{Re}(h) > 0$, then
\begin{equation}
\int^\infty_a \frac{y^{1 - \nu}}{ (y^2 - a^2)^\mu} K_\nu(hy) dy
= 
2^{-\mu} a^{1 -\mu - \nu} h^{\mu - 1} \Gamma(1-\mu) K_{1-\mu - \nu}(ha).
\label{Ktransform}
\end{equation}
Using this, we could evaluate $g_j(y_0)$ as follows.
\begin{align*}
g_1(y_0) &= \pi i \sum_{n \neq 0}  n \cn \int^\infty_{y_0} K_0(2 \pi |n| y)  
 \frac{y}{(y^2 - y^2_0)^{1/2}} dy \\
&= \frac{\pi i \sqrt{y_0}}{2} \sum_{n \neq 0} \cn \frac{n}{\sqrt{|n|}} K_{1/2}(2 \pi |n| y_0) \\
&= \frac{\pi i}{4} \sum_{n \neq 0} \cn \frac{n}{|n|} e^{-2 \pi |n| y_0}.
\end{align*}
For $g_2(y_0)$ and $g_3(y_0)$, we can write
\begin{align*}
g_2(y_0) &= \frac{-i}{2} \sum_{n \neq 0} \cn \int^\infty_{y_0} \frac{K_0(2 \pi |n| y) }{ 2(y^2 - y^2_0)^{1/2}} + \frac{(-2 \pi |n|) K_1(2 \pi |n| y) y}{(y^2 - y^2_0)^{1/2}} dy, \\
g_3(y_0) &= \frac{i}{4} \sum_{n \neq 0} \cn \int^\infty_{y_0} \frac{K_0(2 \pi |n| y) }{(y^2 - y^2_0)^{1/2}} - \frac{2 y_0( y - y_0) K_0(2 \pi |n| y)}{(y^2 - y^2_0)^{3/2}} dy.
\end{align*}
Adding them together, the first term cancels and the sum can be re-written as
\begin{align*}
g_2(y_0) + g_3(y_0) 
= &
\frac{-i}{2} \sum_{n \neq 0} \cn \int^\infty_{y_0} 
(-2 \pi |n|) K_1(2 \pi |n| y) \left( \frac{ y - y_0 }{ y + y_0} \right)^{1/2}
+ 
K_0(2 \pi |n| y) \frac{y_0(y - y_0)}{(y^2 - y^2_0)^{3/2}} dy \\
& 
+ \pi i y_0 \sum_{n \neq 0} |n| \cn \int^\infty_{y_0} \frac{K_1(2 \pi |n| y) }{(y^2 - y^2_0)^{1/2}} dy.
\end{align*}
The first term is exact and integrates to $K_0(2 \pi |n| y)\left( \frac{ y - y_0 }{ y + y_0} \right)^{1/2}$, which evaluates to 0 at both ends.
The second term can be evaluated using  \eqref{Ktransform}, which gives us 
$$
g_2(y_0) + g_3(y_0) = \frac{\pi i}{4} \sum_{n \neq 0} \cn e^{-2 \pi |n| y_0}.
$$
Adding this to $g_1(y_0)$, we have
$$
f_1(i y_0) = -\frac{\pi }{2} \sum_{n \ge 1} \cn e^{-2 \pi |n| y_0} =  -\frac{ \pi }{2} f(i y_0).
$$
A similar calculation yields the second claimed identity. 
\end{proof}

\section{Renormalization and the Maass waveform $\varphi_{0,W}$}\label{sec:realquadratic} 



This section contains the proofs of  the theorems stated in Section \ref{sec:Results} 
concerning the $q$-hypergeometric series $W(q)$. 

We start with Theorem \ref{thm:MaassW}. Our proof closely follows the technique that Cohen employs in \cite{cohen} to prove his Theorems $2.1$ and $3.1$ therein. First, an easy computation using the formalism of Hecke characters and their associated Hecke $L$-functions allows us to establish the following 

\begin{proposition}
\begin{enumerate}
\item $\chi_W$ (resp. $\chi_W\cdot \left( \frac{-1}{\cN} \right)$) is a primitive Hecke character of conductor $\ff=(\sqrt{2})^5$ and infinity type $(++)$ (resp. $(--)$) on $\bQ(\sqrt{2})$.
\item Let $\Lambda(s)$ (resp. $\Lambda_1(s)$) be the completed Hecke $L$-functions associated to $\chi_W$ (resp. $\chi_W\cdot \left( \frac{-1}{\cN} \right)$), i.e.
\begin{align*}
\Lambda(s) &= 16^s\cdot\pi^{-s}\Gamma\left(\frac{s}{2}\right)^2\cdot \sum_{\fa \subset \Z[\sqrt{2}]} \frac{\chi(\fa)}{(\cN\fa)^s} , \\
\Lambda_1(s) &= 16^s\cdot\pi^{-s-1}\Gamma\left(\frac{s+1}{2}\right)^2\cdot \sum_{\fa \subset \Z[\sqrt{2}]} \frac{\chi_W(\fa)\cdot \left( \frac{-1}{\cN\fa} \right)}{(\cN\fa)^s}.
\end{align*}
Then $\Lambda(s)$ (resp. $\Lambda_1(s)$) converges absolutely (uniformly on compacta) for $\Re(s)>1$, extends to an entire function on $\bC$ which is bounded in vertical strips, and has functional equation $\Lambda(s)=\Lambda(1-s)$ (resp. $\Lambda_1(s)=\Lambda_1(1-s)$).
\end{enumerate}
\end{proposition}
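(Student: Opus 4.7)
The plan is to split the proposition into its two parts: (1) verifying that $\chi_W$ and $\chi_W \cdot \left(\frac{-1}{\cN}\right)$ are primitive Hecke characters of the stated conductor and infinity types, which reduces to finite computations in $(\cO/(\sqrt{2})^5)^\times$ and $(\bZ/16\bZ)^\times$; and (2) deducing the analytic assertions from the general theorem on completed Hecke $L$-functions recalled in Section 2.2, together with an explicit root-number computation.

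For part (1), I would first observe that for any ideal $\fa \subset \bZ[\sqrt{2}]$ coprime to $(\sqrt{2})$, the norm $\cN\fa$ lies in the subgroup $H = \{1,7,9,15\} \subset (\bZ/16\bZ)^\times$: split rational primes $p$ satisfy $p \equiv \pm 1 \pmod 8$, so $p \bmod 16 \in H$, and inert primes contribute norms $p^2 \equiv 1$ or $9 \pmod{16}$, again in $H$. On $H$ the assignment $\pm 1 \mapsto 1,\ \pm 7 \mapsto -1$ is a homomorphism (direct from the multiplication table), so $\chi_W$ defines a genuine multiplicative ideal character. To realize this as a Hecke character, set $\chi_f : (\cO/(\sqrt{2})^5)^\times \to \{\pm 1\}$, $\chi_f(\alpha) := \chi_W(|\cN\alpha| \bmod 16)$. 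Writing $\alpha = a + b\sqrt{2}$ with $\cN\alpha = a^2 - 2b^2$ and using that $(\sqrt{2})^5 = \{c + d\sqrt{2} : 8 \mid c,\ 4 \mid d\}$, a short check (the key being that $b + b'$ is automatically even when $b \equiv b' \pmod 4$) shows that $a^2 - 2b^2 \bmod 16$ is determined by $a \bmod 8$ and $b \bmod 4$, giving well-definedness of $\chi_f$. Invariance under the units $\cO^\times = \langle -1, 1+\sqrt{2}\rangle$ (all of absolute norm $1$) ensures compatibility with passage to principal ideals, and the infinity type is $(++)$ since the character is of finite order. Primitivity at $(\sqrt{2})^5$ is then verified by exhibiting two elements congruent modulo $(\sqrt{2})^4 = (4)$ with distinct values of $\chi_f$, ruling out descent to any proper divisor of $\ff$. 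The twist $\chi_W \cdot \left(\frac{-1}{\cN}\right)$ is handled analogously: the Dirichlet factor $\left(\frac{-1}{|\cN\alpha|}\right)$ decomposes as a character of $\cN\alpha \bmod 4$ times $\mathrm{sgn}(\cN\alpha)$, the former being absorbed into a modified finite component of the same conductor and the latter producing the infinity component of type $(--)$.

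Part (2) then follows directly from the theorem on completed Hecke $L$-functions in Section 2.2 applied to both characters: since $\ff = (\sqrt{2})^5 \neq \cO$, that theorem yields absolute convergence for $\mathrm{Re}(s) > 1$, entire meromorphic continuation, and the functional equation $\Lambda(\chi, s) = W(\chi) \Lambda(\overline{\chi}, 1-s)$; boundedness in vertical strips is the standard Phragm\'en--Lindel\"of consequence. Since $\chi_W$ and its twist are real-valued, $\overline{\chi} = \chi$, and hence the stated symmetric functional equations reduce to the claim $W(\chi_W) = W\!\left(\chi_W \cdot \left(\frac{-1}{\cN}\right)\right) = +1$.

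The main obstacle is precisely this root-number determination. A direct route follows Neukirch (VII.8) and expresses $W(\chi)$ as a normalized Gauss sum over $(\cO/\ff)^\times$, which at conductor $(\sqrt{2})^5$ reduces to a finite but tedious explicit computation. A cleaner route exploits the Artin-theoretic interpretation at the end of Section 2.2: because $\varphi_{0,W}$ does not vanish at the cusp $1/2$ (part (4) of Theorem \ref{thm:MaassW}), the induced representation $\rho_{\chi_W}$ is reducible, so $L(\chi_W, s)$ factors as a product of two quadratic Dirichlet $L$-functions whose root numbers are classically known to equal $+1$ by Gauss's sign theorem; the same argument applies to the twisted character, giving the symmetric functional equations stated in the proposition.
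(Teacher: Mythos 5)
The paper itself offers no written proof of this proposition (it is introduced only with the phrase ``an easy computation using the formalism of Hecke characters\dots''), so your job here is to supply the omitted computation, and your part (1) does this correctly and completely: the verification that all ideal norms prime to $(\sqrt{2})$ land in $H=\{\pm1,\pm7\}\subset(\bZ/16\bZ)^\times$, that the assignment is multiplicative on $H$, that $a^2-2b^2 \bmod 16$ depends only on $a \bmod 8$ and $b\bmod 4$ (hence $\chi_f$ is well defined modulo $(\sqrt2)^5=\{c+d\sqrt2: 8\mid c,\ 4\mid d\}$), the unit check, the primitivity check at $(\sqrt2)^4=(4)$ (e.g.\ $1$ and $5$ are congruent mod $4$ but have norms $1$ and $9$), and the splitting of $\left(\tfrac{-1}{|\cN\alpha|}\right)$ into a mod-$4$ finite piece and a $\mathrm{sgn}(N\alpha)$ infinite piece are exactly the computations being waved at. One can also confirm the normalizations: $(|d_K|\cdot\cN\ff)^{s/2}=(8\cdot 32)^{s/2}=16^s$ and the displayed Gamma factors match infinity types $(++)$ and $(--)$.

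The weak point is the last step of part (2), the claim $W(\chi_W)=W\bigl(\chi_W\cdot(\tfrac{-1}{\cN})\bigr)=+1$, where you have two genuine problems. First, you invoke part (4) of Theorem \ref{thm:MaassW} to get reducibility of $\rho_{\chi_W}$; but that theorem is proved \emph{using} this proposition (the functional equations of $\Lambda$ and $\Lambda_1$ are the entire content of the converse-theorem argument for statement (2) there), so this is circular. The fix is cheap: reducibility of $\rho_{\chi_W}$ follows directly from the fact that the ray class field $K_\ff$ is abelian over $\bQ$ (it sits inside $\bQ(\zeta_{16})$), with no reference to cusps or to $\varphi_{0,W}$. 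Second, the resulting factorization is $L(\chi_W,s)=L(\chi,s)L(\overline{\chi},s)$ with $\chi$ the \emph{quartic} (order $4$) primitive character mod $16$ satisfying $\chi(-1)=1$, $\chi(3)=i$ --- not a pair of quadratic characters, so ``Gauss's sign theorem'' does not apply, and indeed the individual root numbers $\epsilon(\chi)$, $\epsilon(\overline{\chi})$ need not be $\pm1$. What saves you is that only their product enters: from $\tau(\chi)\tau(\overline{\chi})=\chi(-1)q$ one gets $\epsilon(\chi)\epsilon(\overline{\chi})=\chi(-1)(-1)^{-a}=1$ for characters of either parity (here $a=0$ for the even pair and $a=1$ for the odd pair obtained after twisting by the mod-$4$ character), which yields $\Lambda(s)=\Lambda(1-s)$ and $\Lambda_1(s)=\Lambda_1(1-s)$ as required. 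With those two repairs the argument is complete; the alternative is the direct Gauss-sum evaluation over $(\cO/(\sqrt2)^5)^\times$ you mention, which is finite but avoidable.
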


\begin{proof}[Proof of Theorem \ref{thm:MaassW}]

Statement $(1)$ is immediate from the standard properties of the $K$-Bessel function $K_0$.

For statement $(2)$, we first observe that $\varphi_0:=\varphi_{0,W}$ and $\varphi_1(z)=\varphi_{1,W}(z):= \overline{\varphi_{0,W}\left(\frac{-1}{4z}\right)}$ are both eigenfunctions of the hyperbolic Laplacian $\Delta$ with the same eigenvalue. Hence, by standard PDE arguments (see e.g. \cite[Lemma 1.9.2 page 109]{Bump}), it suffices to show that 
\begin{equation}\label{eq:MaassWeq1}
\varphi_0(it)=\varphi_1(it)
\end{equation}
and 
\begin{equation}\label{eq:MaassWeq2}
\left[\frac{\partial}{\partial x} \varphi_0(x+it)\right]_{x=0} = \left[\frac{\partial}{\partial x} \varphi_1(x+it)\right]_{x=0}.
\end{equation}

To see why (\ref{eq:MaassWeq1}) holds true, observe that for $\sigma_0>1/2$ we have
\[
\varphi_0(it)=\frac{1}{8\sqrt{2}\pi i} \int_{\sigma_0-i\infty}^{\sigma_0+i\infty} 2^{-s}\Lambda(s+1/2)t^{-s}ds.
\]
Since $\Lambda(s)$ is entire and bounded in vertical strips, we can shift the line of interation to $\Re(s)=-\sigma_0$ and then apply the functional equation of $\Lambda(s)$ to derive (\ref{eq:MaassWeq1}).

The equality (\ref{eq:MaassWeq2}) follows similarly, albeit more complicated. First, we put
\[
\psi_i(t):=\left[\frac{\partial}{\partial x} \varphi_i(x+it)\right]_{x=0},
\]
and compute the Mellin transform of $\psi_0(t)$ which equals
\[
\pi i 2^{-s-1/2}\Lambda_1(s-1/2).
\]
Hence, for $\sigma_0>3/2$ we have
\[
\psi_0(t)=\int_{\sigma_0-i\infty}^{\sigma_0+i\infty} 2^{-s-3/2}\Lambda(s-1/2)t^{-s}ds.
\]
Since $\Lambda_1(s)$ is entire and bounded in vertical strips, we can shift the line of interation to $\Re(s)=2-\sigma_0$ and then apply the functional equation of $\Lambda_1(s)$ to derive (\ref{eq:MaassWeq2}).

For Statement $(3)$, we note that $\Gamma_0(4)$ is generated by $3$ elements
\[
A=\begin{bmatrix} 1 & 1 \\ 0 & 1 \end{bmatrix}, 
B=\begin{bmatrix} 3 & -1 \\ 4 & -1 \end{bmatrix},
C=\begin{bmatrix} -1 & 0 \\ 0 & -1 \end{bmatrix}.
\]
Put $D=\begin{bmatrix} 0 & -1 \\ 4 & 0 \end{bmatrix}$, so $\varphi_{0,W}(Dz)=\overline{\varphi_{0,W}(z)}$. Then $(3)$ follows from the facts that $ADAD=4B$ and that $\varphi_{0,W}(z-1)=e^{-2\pi i/8}\varphi_{0,W}(z)$, or equivalently that $\varphi_{0,W}(Az)=e^{2\pi i/8}\varphi_{0,W}(z)$.

Finally for Statement $(4)$, let $\rho_W: \Gal(\overline{\bQ}/\bQ) \longrightarrow \mathrm{GL}_2(\bC)$ be the Galois representation induced from $\chi_W$ and consider its associated Maass waveform $\tilde{\varphi}_{0, W}(z)$, which can be written as 
$$
\tilde{\varphi}_{0, W}(z) = \varphi_{0, W}(8z).
$$
Notice that the transformation identities satisfied by $\varphi_{0, W}(z)$ implies $\tilde{\varphi}_{0, W}(z)$ has level 256 and trivial character.

We have shown that $\varphi_{0, W}(z)$ transforms with respect to $\Gamma_0(4)$, which has three cusps, and vanishes at the cusps 0 and $\infty$.
To show that it does not vanish at the cusp $1/2$, it suffices to prove that $\varphi_{0, W}(8z)$
does not vanish at some cusp of $\Gamma_0(256)$, i.e. $\tilde{\varphi}_{0, W}(z)$ is a Maass-Eisenstein series.
It is not hard to see that the ray class field $K_\ff$ with $K = \bQ(\sqrt{2})$ and $\ff = (\sqrt{2})^5$ is the totally real subfield of degree 4 contained in $\bQ(\zeta_{16})$ and isomorphic to
$$
\bQ[X]/(X^4 - 4X^2 + 2).
$$
Since $K_\ff/\bQ$ is Galois and abelian, $\rho_W$ factors through $\Gal(K_\ff/\bQ)$ and is reducible. 
Thus, $\tilde{\varphi}_{0, W}(z)$ is a Maass-Eisenstein series.

Furthermore, the $L$-function $L(\rho_W, s)$ can be factored as
$$
L(\rho_W, s) = \frac{L(K_\ff, s)}{L(K, s)} = L(\chi, s) L(\overline{\chi}, s),
$$
where $L(F, s)$ is the Dedekind zeta function for a number field $F$ and $\chi: (\bZ/16 \bZ)^\times \longrightarrow \bC^\times$ is the Dirichlet character satisfying
$$
\chi(-1) = 1, \chi(3) = i.
$$
This also gives us the relationship
$$
\chi_W(n) = \sum_{d \mid n, d > 0} \chi(d) \overline{\chi} \left( \tfrac{n}{d} \right).
$$
\end{proof}

\begin{remark}
One can use the relationships among the Fourier coefficients of $\tilde{\varphi}_{0, W}(z)$ at various cusps given in \textsection 7 of \cite{GHL} to explicitly express the Fourier coefficients of $\varphi_{0, W}(z)$ at the cusp $1/2$ in terms of its Fourier coefficients at $\infty$. 
\end{remark}


Next, we consider Theorem \ref{thm:W2}.

Corson, Favero, Liesinger, and Zubairy \cite{CFLZ} defined the pair of $q$-hypergeometric series 
\begin{equation}
W_1(q):= \sum_{n=0}^\infty \frac{(q;q)_n (-1)^n q^{\frac{1}{2}n(n+1)}}{(-q;q)_n} \label{eqn:CFLZ_W1} \ \ \ \text{ and } \ \ \ 
W_2(q):= \sum_{n=1}^\infty \frac{(-1;q^2)_n (-1)^n q^n}{(q;q^2)_n}. 
\end{equation}

\begin{theorem}[Theorems 1.1 and 3.2 of \cite{CFLZ}]\label{thm:CFLZ} 
For $\abs{q}<1$ we have 
$$qW_1(q^8) + q^{-1} W_2(q^8) = \sum_{\fa \subset \Z[\sqrt{3}] } \chi_{W}(\fa) q^{\abs{N(\fa)}}.$$
 Moreover, that $qW_1(q^8) = \sum_{\begin{subarray}{c} \fa \subset \Z[\sqrt{2}] \\
N(\fa)>0 \end{subarray}} \chi_{W}(\fa) q^{N(\fa)}.$ 
\end{theorem}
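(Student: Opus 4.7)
The plan is to translate the ideal sum on the right-hand side into an indefinite theta series in $q$ and then match it to the $q$-hypergeometric expression on the left via a Bailey pair argument, in close analogy with Andrews--Dyson--Hickerson's treatment of $\sigma(q)$. (We interpret $\Z[\sqrt{3}]$ in the statement as $\Z[\sqrt{2}]$, the ring on which $\chi_W$ is defined.)

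First, I would parametrize the ideals of $\Z[\sqrt{2}]$. Since this ring is a PID with fundamental unit $\epsilon = 1 + \sqrt{2}$ of norm $-1$, every non-zero ideal has the form $(u + v\sqrt{2})$ for some $(u, v) \in \Z^2$, and distinct ideals correspond to equivalence classes of such pairs under the action of $\pm\epsilon^{\Z}$. Since $\chi_W((\alpha))$ depends only on $N(\alpha) = u^2 - 2v^2 \pmod{16}$, the sum $\sum_\fa \chi_W(\fa) q^{|N\fa|}$ becomes a sum over a fundamental domain for this action, weighted by the character value. Next I would split the sum by the sign of the norm: because $N(\epsilon\alpha) = -N(\alpha)$, the positive- and negative-norm parts each become \emph{indefinite} theta functions, i.e.\ sums over lattice points $(u, v)$ lying in a region bounded by two rays through the origin, weighted by $\chi_W(\pm(u^2 - 2v^2)) q^{|u^2 - 2v^2|}$. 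The dilation $q \mapsto q^8$ and the shifts by $q^{\pm 1}$ on the left arise naturally from the congruence conditions $N\fa \equiv \pm 1, \pm 7 \pmod{16}$ built into $\chi_W$.

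Second, I would identify each indefinite theta series with its $q$-hypergeometric counterpart. For $W_2(q)$, the identity $(-1; q^2)_n = 2(-q^2; q^2)_{n-1}$ for $n \ge 1$ together with Fine's transformation \eqref{eqn:fine6.3} (or Ramanujan's identity \eqref{eqn:entry1.7.2}) converts the series into a form amenable to Bailey-pair analysis. The standard mechanism is to exhibit a Bailey pair $(\alpha_n, \beta_n)$ such that the $\beta$-side recovers $W_2(q^8)$ while the $\alpha$-side, after multiplication by a Jacobi triple product factor, expands as the desired indefinite lattice sum. An analogous Bailey pair handles $W_1$, and the second assertion of the theorem, which isolates the positive-norm part, follows by tracking which terms of the Bailey expansion correspond to each sign of $u^2 - 2v^2$.

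The main obstacle will be constructing and verifying the Bailey pairs so that the arithmetic progressions modulo $16$ demanded by $\chi_W$ emerge correctly from the $q$-series manipulations. In the $\sigma(q)$ case this is handled by a careful quadratic completion inside the Bailey machinery; here the additional technical complications are the simultaneous presence of positive- and negative-norm contributions (appearing as two separate $q$-hypergeometric pieces) and the need to track congruences mod $16$ rather than mod $24$. This is precisely the feature that forces the emergence of the Maass waveform $\varphi_{0, W}$ of level $4$, rather than a holomorphic modular form.
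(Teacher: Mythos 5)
Your plan is essentially the route the paper itself takes: Theorem \ref{thm:CFLZ} is quoted from Corson--Favero--Liesinger--Zubairy, and the paper notes that it reduces to the Hecke-type double-sum identities $W_1(q)=\sum_{n\ge0}\sum_{|j|\le n}(-1)^{n+j}q^{2n^2+n-j^2}(1-q^{2n+1})$ and $W_2(q)=\sum_{n\ge1}\sum_{-n\le j<n}(-1)^nq^{2n^2-n-j^2+j}(1+q^n)$ obtained by Bailey pairs, which are then reinterpreted as sums over a fundamental domain for the unit action on $\Z[\sqrt{2}]$ exactly as you describe (and you are right that $\Z[\sqrt{3}]$ in the statement is a typo for $\Z[\sqrt{2}]$). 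The only caveat is that your write-up is a strategy rather than a completed argument--the Bailey pairs are asserted to exist rather than exhibited--but this matches the level of detail the paper itself supplies by citation.
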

\begin{remark}
As above,  our notation for the norm is slightly different. 
\end{remark}

In particular, this theorem follows easily from the following identities 
\begin{align}
W_1(q) =& \sum_{n \ge 0} \sum_{\abs{j} \le n} (-1)^{n+j} q^{2n^2+ n - j^2} (1-q^{2n+1}) \\
W_2(q) = & \sum_{n\ge 1} \sum_{-n \le j < n} (-1)^n q^{2n^2 - n - j^2 +j} (1+q^{n}).
\end{align}
These identities appear as Theorems 2.3 and 2.4 of \cite{CFLZ} and follow from the method of Bailey pairs.

We need one more result dealing with these series before we can prove Theorem \ref{thm:W2}. 
\begin{proposition}\label{prop:W1}
In the notation of Theorem \ref{thm:W2}, with $\abs{q}<1$, we have 
$$W_1(q) = \cS[W](q) 
.$$

\end{proposition}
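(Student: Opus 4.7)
The plan is to establish this identity by transforming both $W_1(q)$ and $\cS[W](q)$, via the two classical $q$-series identities already recalled in the paper, to one common intermediate $q$-hypergeometric expression, namely
\[
\frac{2}{1+q}\sum_{n\ge 0}\frac{(-1)^n (q^2;q^2)_n}{(-q^3;q^2)_n}.
\]
Once both series are brought to this shape, the identity of the proposition follows immediately.

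First I would attack the $W_1(q)$ side via Ramanujan's Entry~1.7.2, equation~\eqref{eqn:entry1.7.2}, with the parameter choice $a=-1$, $b=1$. With these choices $-aq/b = q$, so $(-aq/b;q)_n = (q;q)_n$ matches the numerator of the summand in $W_1$, while $(-b;q)_{n+1} = (-1;q)_{n+1} = 2(-q;q)_n$ matches (up to the factor of $2$) its denominator; hence the left-hand side of \eqref{eqn:entry1.7.2} is $\tfrac12 W_1(q)$. On the right-hand side of \eqref{eqn:entry1.7.2}, the base-doubling identity $(q;q)_n(-q;q)_n = (q^2;q^2)_n$ combines the two base-$q$ pochhammers into a single base-$q^2$ one, and the shift $(-q;q^2)_{n+1} = (1+q)(-q^3;q^2)_n$ factors out the required prefactor $2/(1+q)$. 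This expresses $W_1(q)$ in the common form.

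Next I would attack the $\cS[W](q)$ side. The explicit $q$-hypergeometric expression for $\cS[W]$ given in Theorem~\ref{thm:W2} can, up to the overall sign, be read in Fine's notation as $F(q^{-1},-1;-q\,:\,q^2)$. A single application of Fine's identity~\eqref{eqn:fine6.3} with base $q^2$ reduces this to $\tfrac{2}{1+q}\,F(1,-q;-1\,:\,q^2)$, and $F(1,-q;-1\,:\,q^2)$ is visibly the same inner sum $\sum_n (q^2;q^2)_n(-1)^n/(-q^3;q^2)_n$ appearing in the common form. Equating the two results completes the proof.

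The main obstacle will be spotting the right parameter choices; once one tries $(a,b) = (-1,1)$ in \eqref{eqn:entry1.7.2} and identifies $\cS[W]$ (up to sign) as the Fine series $F(q^{-1},-1;-q\,:\,q^2)$, each calculation is only a line or two. The more delicate aspect is keeping the base-$q$ and base-$q^2$ pochhammers consistent and tracking the overall sign in the definition of $\cS[W]$; the bridge identities $(q;q)_n(-q;q)_n = (q^2;q^2)_n$ and $(-q;q^2)_{n+1} = (1+q)(-q^3;q^2)_n$ are precisely what allow the base-$q$ form of $W_1$ and the base-$q^2$ form of $\cS[W]$ to meet at this common expression.
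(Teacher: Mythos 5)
Your proposal is correct and follows essentially the same route as the paper: both apply Ramanujan's Entry 1.7.2, i.e. \eqref{eqn:entry1.7.2}, with $a=-1$, $b=1$ to rewrite $\tfrac12 W_1(q)$ as $\tfrac{1}{1+q}F(1,-q;-1:q^2)$, and then invoke the very same instance of Fine's transformation \eqref{eqn:fine6.3} (you read it in the opposite direction) to identify this with $F(q^{-1},-1;-q:q^2)$, which is $\cS[W](q)$ up to the overall sign. The only difference is cosmetic: you meet in the middle at the common expression $\tfrac{2}{1+q}\sum_{n\ge0}(-1)^n(q^2;q^2)_n/(-q^3;q^2)_n$ instead of chaining the two identities, and the sign bookkeeping you flag as delicate is an inconsistency already present in the paper's own statement rather than a defect of your argument.
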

\begin{proof}
We begin with a proof of the first claim. 
Equation \eqref{eqn:entry1.7.2} with $a = -1$ and $b = 1$ 
gives 
\begin{align*}
\frac{1}{2} \sum_{n=0}^\infty \frac{(q)_n (-1)^n q^{\frac{1}{2}n(n+1)}}{(-q)_n} 
= &
\sum_{n=0}^\infty \frac{(q^2;q^2)_n}{(-q;q^2)_{n+1}} (-1)^n \\=&
\frac{1}{1+q} F(1, -q; -1:q^2)
 \end{align*}
 Equation \eqref{eqn:fine6.3} gives 
 $$F(1, q,;-1:q^2) = \frac{1+q}{2} F(q^{-1}, -1; -q:q^2).$$
 Combining these establishes the desired identity. 
 
\end{proof}

\begin{proof}[Proof of Theorem \ref{thm:W2}]

The claim about the $q$-expansions $W(q)$ and $\cS[W](q)$ 
follow immediately from Theorem \ref{thm:CFLZ} and Proposition \ref{prop:W1}.

It is
clear that $W_{n}(q^{-1}) = (-1;q^2)_n/(q;q^2)_n$ and
$W_{\infty}(q^{-1}) = \frac{(-1;q^2)_\infty}{(q;q^2)_\infty}.$
Applying Theorem \ref{thm:AJO} with $t=-q^2$ and $a = q^3$, then
multiplying by $\frac{2}{(1-q)}$ gives \eqref{eqn:W2star}.
Proposition \ref{prop:W1} and Theorem \ref{thm:CFLZ} give the first
renormalization claim.

To prove that renormalization is an involution, we have
$$
\cS[W](q) = \sum_{n=0}^\infty \cS[W]_{n}(q) := -\sum_{n=0}^\infty
\frac{(q;q^2)_n (-1)^n q^n}{(-q^2;q^2)_n}.
$$ 
Hence $\cS[W]_{n}(q^{-1})
= -\frac{(q;q^2)_n}{(-q^2;q^2)_n}$ and $\cS[W]_{\infty}(q^{-1}) = -
\frac{(q;q^2)_\infty}{(-q^2; q^2)_\infty}$.  
To obtain the 
identity of the theorem we apply Theorem \ref{thm:AJO} by sending
$q\mapsto q^2$, then setting $t = q$ and $a = -q^2$.
\end{proof}

\begin{proof}[Proof of Theorem \ref{thm:realquadratic_quantum}]
The proof of this theorem follows from the theory of period functions
of Maass wave forms developed by Lewis and Zagier \cite{LewisZagier}
and extended by Bruggeman, Lewis, and Zagier \cite{BLZ2}.

First the valuation of the series $f_W(x)$ follows from the
definitions for the series $W(q)$ and ${\mathcal S}[W](q)$.  
The modular transformation properties follow by identifying the series
$W(q)$ and ${\mathcal S}[W](q)$ with the coefficients of the Maass
wave form, a result given in Theorem
\ref{thm:realquadratic_coefficients}.  
%
To check that the period function $h_\gamma(x)$ given in equation \eqref{eq:period_function} satisfies the desired property for all $\gamma \in \Gamma_0(4)$, it is enough to check when $\gamma$ is a generators of $\Gamma_0(4)$, i.e. 
\[
A=\begin{bmatrix} 1 & 1 \\ 0 & 1 \end{bmatrix}, 
B=\begin{bmatrix} 3 & -1 \\ 4 & -1 \end{bmatrix},
C=\begin{bmatrix} -1 & 0 \\ 0 & -1 \end{bmatrix}.
\]
When $\gamma = A$ or $C$, it is clear that $f_W\mid_{1, \gamma}(z) - \nu_1(\gamma) f_W = 0$. 
When $\gamma = B$, one could use the same calculations in Proposition \ref{prop:termbyterm} to show that the following equations holds up to a constant
\begin{equation*}
f_W(z) \stackrel{\cdot}{=} 
\left\{
\begin{aligned}
&\int_z^{i\infty} [\varphi_{0, W}(\tau), R_z(\tau)^{1/2}], \text{ when } z \in \bH,\\
 &- \int_{\overline{z}}^{i\infty} [R_z(\tau)^{1/2}, \varphi_{0, W}(\tau)], \text{ when } z \in \bH^-.
\end{aligned} 
\right.
\end{equation*}
Then the period function $h_B(z) = f_W\mid_{1, B}(z) - \nu_1(B) f_W(z)$ can be written as
$$
h_B(z) \stackrel{\cdot}{=} - \int^{i \infty}_{1/4} [\varphi_{0, W}(\tau), R_z(\tau)^{1/2}],
$$
for $z \in \bH \cup \bH^-$ with the path of integration to the left of $z$ and $\overline{z}$. 
Following the same the discussion on page 7 of \cite{zagierQuantum}, one can show that $h_B(z)$ is real-analytic on $\bR \backslash \{1/4\}$ and $C^\infty$ at $1/4$. 
%
%
\end{proof}


\section{Open Problems}\label{sec:Other}

This section contains some additional examples which do not seem to
fit as nicely into the theory of Maass waveforms, but still contain
some information about the arithmeitc if real quadratic fields.

\subsection{Other series associated with real quadratic fields}
Lovejoy \cite{lovejoy} and 
Bringmann and Kane \cite{BK} constructed a number of $q$-hypergeometric series 
associated to real quadratic fields.  These examples each realize arithmetic of the an associated
real quadratic field.  
However, they do not appear to be directly related to  Maass waveforms.  It would be interesting
to place these forms in the context of Zwegers's work \cite{zwegersIndef} which deals with 
indefinite quadratic forms.

\vspace{.1in}
The following summarizes the results of Theorems 1 and 2 of \cite{BK}. 
\begin{theorem}[Theorems 1 and 2 of \cite{BK}]\label{thm:BK_12}
With $\abs{q}<1$, 
\begin{align*}
f_1(q):= & \sum_{n=0}^\infty \frac{q^{\frac{1}{2}n(n+1)}}{(-q)_n (1-q^{2n+1})} = 
              1+ 2q + 3q^3 + q^5 + 2q^6 + 2q^7 + 4q^{10} + \cdots + 6 q^{52} + \cdots  \\
&\text{ then }  \hspace{.5in}  qf_1(q^{16}) =  \sum_{ \begin{subarray}{c} \fa \subset Z[\sqrt{2}]\\ N(\fa) \equiv 1\pmod{16} \end{subarray} } q^{N(\fa)} \\
f_2(q):= & \sum_{n=0}^\infty \frac{q^{\frac{1}{2} n(n+1)}}{(-q)_{n-1} (1-q^{2n-1})} =
   q + q^2 + 2q^3 + 2q^5 + 2q^6 + \cdots + 4q^{14} + \cdots + 3q^{77} + \cdots \\
   & \text{ then } \hspace{.5in} q^{-7} f_2(q^{16}) = \sum_{ \begin{subarray}{c} \fa \subset Z[\sqrt{2}]\\ N(\fa) \equiv 9\pmod{16} \end{subarray} } q^{N(\fa)} 
\end{align*}
\end{theorem}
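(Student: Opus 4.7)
The plan is to follow the strategy of Bringmann and Kane \cite{BK}: establish both identities by combining the Bailey pair machinery with Hecke's theory of indefinite theta series attached to the norm form on $\bZ[\sqrt{2}]$. At the heart of the argument lies a conversion of each $q$-hypergeometric series on the left into a double sum of Hecke type, which is then matched term by term with a sum of $q^{N(\fa)}$ over a fundamental domain for ideals $\fa \subset \bZ[\sqrt{2}]$ of a specified norm class modulo $16$.

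For $f_1(q)$, the first step would be to expand the geometric series
\begin{equation*}
\frac{1}{1-q^{2n+1}} = \sum_{j \ge 0} q^{(2n+1)j}
\end{equation*}
so that the summand becomes $q^{n(n+1)/2 + (2n+1)j}/(-q;q)_n$. The factor $1/(-q;q)_n$ can then be unfolded via a limiting case of a Bailey pair identity (equivalently, a finite form of the Jacobi triple product together with an inversion), yielding an identity of the shape
\begin{equation*}
f_1(q) = \sum_{(m,\ell) \in R_1} \ve_1(m,\ell)\, q^{Q(m,\ell)},
\end{equation*}
where $Q(m,\ell)$ is an indefinite binary quadratic form equivalent over $\bZ$ to the norm form $a^2 - 2b^2$, and $R_1 \subset \bZ^2$ and $\ve_1 : R_1 \to \{\pm 1\}$ are explicit. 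The analogous procedure, with $(-q)_{n-1}(1-q^{2n-1})$ in place of $(-q)_n(1-q^{2n+1})$, yields the corresponding expression for $f_2(q)$.

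Next I would identify each double sum with the desired ideal-counting sum. Since $\bZ[\sqrt{2}]$ has class number one and fundamental unit $\eps = 1+\sqrt{2}$, every ideal $\fa$ with $N(\fa) > 0$ possesses a unique generator $a+b\sqrt{2}$ lying in an explicit fundamental domain for the action of $\langle \pm \eps \rangle$ on $\bZ[\sqrt{2}]$. The task is to exhibit a bijection between $R_1$ and those fundamental-domain generators whose norm $a^2 - 2b^2$ satisfies $\equiv 1 \pmod{16}$ (respectively $\equiv 9 \pmod{16}$ for $f_2$, accounting for the prefactors $q$ and $q^{-7}$ that shift the exponents), and to verify that $\ve_1$ is identically $+1$ on this subset so that each ideal contributes exactly $q^{N(\fa)}$. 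The congruences mod $16$ themselves should emerge from direct analysis of $n(n+1)/2 + j(2n+1)$ (and its $f_2$-analogue) modulo $16$.

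The main obstacle, as with every Hecke-type identity over a real quadratic field, is the infinite order of $\eps$: one cannot naively pair individual summands with individual ideals because the indefinite form represents each norm infinitely often. The careful bookkeeping therefore requires telescoping along the $\langle \eps \rangle$-orbits on $\bZ^2$ and exploiting the fact that each orbit meets the chosen fundamental domain exactly once, precisely in the spirit of Hecke's original treatment of indefinite theta series. This is the most delicate part of the argument, and it is where the Bailey pair output must be cut open and reassembled to produce a genuine counting function on a fundamental domain.
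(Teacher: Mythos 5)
The paper does not actually prove this statement; it quotes it verbatim from Bringmann--Kane \cite{BK} (``The following summarizes the results of Theorems 1 and 2 of \cite{BK}''), so there is no internal proof to compare against. Your outline does correctly identify the strategy of that source — Bailey pairs producing a Hecke-type double sum over an indefinite form equivalent to $a^2-2b^2$, followed by a unit-orbit/ideal correspondence — but as written it is a plan rather than a proof: every load-bearing step is asserted rather than carried out. In particular, the claim that after expanding $\frac{1}{1-q^{2n+1}}=\sum_{j\ge0}q^{(2n+1)j}$ the factor $\frac{1}{(-q)_n}$ ``can be unfolded via a limiting case of a Bailey pair identity'' is not a step one can wave at: the entire content of the cited theorems is the specific Bailey pair and the resulting explicit identity of the shape
\[
f_1(q)=\sum_{n\ge0}\sum_{|j|\le n}\ve_1(n,j)\,q^{Q(n,j)},
\]
the analogue of the displayed expansions of $W_1$ and $W_2$ following Theorem \ref{thm:CFLZ}. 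Without exhibiting $Q$, the region $R_1$, and the signs $\ve_1$, nothing downstream can be checked — least of all the assertion that $\ve_1\equiv+1$ on the relevant subset, which is exactly the nontrivial positivity phenomenon these theorems encode (note both $f_1$ and $f_2$ have nonnegative coefficients, unlike the signed series $W_1, W_2$).

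The arithmetic half also has a concrete gap. Ideals of $\bZ[\sqrt2]$ correspond to orbits of elements under the full unit group $\langle -1,\,1+\sqrt2\rangle$, whereas the norm form $a^2-2b^2$ is preserved only by the totally positive unit $(1+\sqrt2)^2=3+2\sqrt2$, since $N(1+\sqrt2)=-1$; the Hecke-type double sum naturally tiles orbits of the latter, and reconciling the two — together with the prefactors $q$ and $q^{-7}$ and the congruences $N(\fa)\equiv1,9\pmod{16}$, which must be read off explicitly from $\tfrac{1}{2}n(n+1)+(2n+1)j$ and its $f_2$-analogue — is precisely the bookkeeping you defer to ``telescoping along the $\langle\eps\rangle$-orbits.'' Until the explicit double-sum identity is derived and that correspondence is executed, the proposal establishes neither identity; it is a correct description of where a proof would live, not a proof.
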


Combining these identities, in the notation of Theorem \ref{thm:W2}
and \eqref{eqn:CFLZ_W1}
$$f_1(q^2) - q^{-1} f_2(q^{2}) = W_1(q) = \cS[W](q).$$
It is reasonable to expect that 
$$f_1(q^{-1}) = - \sum_{n=0}^\infty \frac{q^{2n+1}}{(-q)_n (1-q^{2n+1})}$$ 
is related to $-\sum_{\begin{subarray}{c} \fa \subset \Z[\sqrt{2}] \\ N(\fa) \equiv -1 \pmod{16} \end{subarray}} q^{N(\fa)}$
and similarly for the sum 
$f_2(q^{-1})$.   This remains a challenge. 
\begin{challenge*}
Relate $f_1(q^{-1})$ and $f_2(q^{-1})$ to 
$\sum_{\begin{subarray}{c} \fa \subset \Z[\sqrt{2}] \\ N(\fa) \equiv a \pmod{16} \end{subarray}} q^{N(\fa)}$
for an  appropriate choice of $a$. 
\end{challenge*}

The following summarizes Theorems 3 and 4 of \cite{BK}. 
\begin{theorem}[Theorems 3 and 4 of \cite{BK}]\label{thm:BK_34}
With $\abs{q}<1$, 
\begin{align*}
f_3(q):= & \sum_{n=0}^\infty \frac{(q)_{2n}q^{n} }{(-q)_{2n+1} } = 
              1- 2q^3 + q^4 + 2q^8 - 2q^{11}  + \cdots + 3 q^{24} + \cdots  \\
&\text{ then }  \hspace{.5in}  qf_3(q^{2}) =  \sum_{ \begin{subarray}{c} \fa \subset Z[\sqrt{2}] \end{subarray} } 
    \( \frac{-4}{N(\fa)} \) q^{N(\fa)} \\
f_4(q):= & \sum_{n=0}^\infty \frac{(q)_{2n+1} q^{ n+1} }{(-q)_{2n+2} } =
   q - q^2 - q^4 + 2q^7 -q^8 + q^9 -2q^{14} + \cdots + 4q^{14} + \cdots  - 3q^{98}+  \cdots \\
   & \text{ then } \hspace{.5in} f_4(q) = 
      - \sum_{  \fa \subset Z[\sqrt{2}] } (-1)^{N(\fa)}  q^{N(\fa)} 
\end{align*}
\end{theorem}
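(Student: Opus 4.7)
The plan is to adapt the strategy of Andrews--Dyson--Hickerson and of Corson--Favero--Liesinger--Zubairy (compare Theorem \ref{thm:CFLZ} above): transform each of $f_3(q)$ and $f_4(q)$ into an indefinite Hecke-type theta series, and then recognize the resulting lattice sum as a weighted sum over ideals of $\Z[\sqrt{2}]$.

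First I would derive double-sum representations for $f_3(q)$ and $f_4(q)$ analogous to Theorems 2.3 and 2.4 of \cite{CFLZ} (which supply such identities for $W_1$ and $W_2$). The natural starting point is the splitting $(q;q)_{2n} = (q;q^2)_n(q^2;q^2)_n$ and $(-q;q)_{2n+1} = (-q;q^2)_{n+1}(-q^2;q^2)_n$, which re-expresses the summands of $f_3(q)$ and $f_4(q)$ in a shape amenable to Bailey-pair manipulation. Applying Bailey's lemma to a suitably chosen base pair should collapse each sum to an indefinite theta expression of the form
\begin{equation*}
\sum_{n\ge 0} \sum_{|j|\le \phi(n)} (-1)^{\epsilon(n,j)} q^{Q(n,j)} \bigl(1 \pm q^{\ell(n)}\bigr),
\end{equation*}
where $Q(n,j)$ is a shifted copy of the indefinite norm form $u^2 - 2v^2$ attached to $\Z[\sqrt{2}]$ and $\epsilon(n,j), \ell(n)$ are explicit linear forms.

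Next, after the rescaling $q \mapsto q^2$ and the pre-factor of $q$ in the $f_3$ case (and directly for $f_4$), I would match the resulting lattice sum against the ideal sum on the right-hand side. Every nonzero ideal $\fa \subset \Z[\sqrt{2}]$ is uniquely generated up to units by some $u+v\sqrt{2}$, and the fundamental unit $1+\sqrt{2}$, of norm $-1$, carves out a natural fundamental domain for the unit action on $\Z^2$ in which $N(\fa) = |u^2 - 2v^2|$. With this parametrization in hand, the remaining task is to check that the sign $(-1)^{\epsilon(n,j)}$ matches $\bigl(\tfrac{-4}{N(\fa)}\bigr)$ for $f_3$ (which indeed vanishes on even norms, corresponding to ideals divisible by $(\sqrt{2})$) and matches $(-1)^{N(\fa)}$ for $f_4$.

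The main obstacle is the Bailey-pair step: identifying the specific base pair and specialization that yield the correct sign structure. An ill-chosen base pair produces an indefinite theta series whose signs do not organize into a single Dirichlet character under the ideal parametrization, even when the exponents cleanly encode the norm form. Once the theta identity is in place, translating the lattice signs into the claimed character is a routine residue computation modulo $4$ (respectively modulo $2$), since $\Z[\sqrt{2}]$ has narrow class number one and the relevant ray class field structure is fully explicit. One should also verify convergence of the resulting indefinite theta sum in the Hecke sense, but this follows from the standard ordering of the fundamental domain, as in \cite{CFLZ}.
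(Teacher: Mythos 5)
First, note that the paper does not prove this statement at all: it is quoted verbatim as Theorems 3 and 4 of \cite{BK}, so there is no internal argument to compare against. Your task, then, is really to reconstruct the Bringmann--Kane proof, and your outline does point at the right machinery --- the same Bailey-pair-to-Hecke-type-theta-series pipeline used in \cite{CFLZ} and invoked in the paper's discussion of Theorem \ref{thm:CFLZ}. The splitting $(q;q)_{2n}=(q;q^2)_n(q^2;q^2)_n$, $(-q;q)_{2n+1}=(-q;q^2)_{n+1}(-q^2;q^2)_n$ is the correct first move, and the endgame you describe (parametrizing ideals of $\Z[\sqrt{2}]$ by a fundamental domain for the unit $1+\sqrt{2}$ and matching signs against $\leg{-4}{N(\fa)}$ and $(-1)^{N(\fa)}$) is routine once the double-sum identities are in hand.

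The genuine gap is exactly the step you flag yourself: you never produce the Bailey pair, the specialization, or the resulting indefinite theta identities for $f_3$ and $f_4$, and those identities \emph{are} the theorem. Everything downstream (the residue computation mod $4$ or mod $2$, the convergence in the Hecke sense) is bookkeeping; everything upstream is a plan with the key object marked ``suitably chosen.'' A proof sketch that says ``apply Bailey's lemma to a suitably chosen base pair'' and then concedes that ``an ill-chosen base pair produces \ldots signs that do not organize into a single Dirichlet character'' has not located the proof --- it has located the problem. To close the gap you would need to exhibit the explicit analogues of Theorems 2.3 and 2.4 of \cite{CFLZ}, i.e.\ identities of the shape $qf_3(q^2)=\sum_{n}\sum_{j}\pm q^{Q(n,j)}(1\pm q^{\ell(n)})$ with $Q$ an explicit shift of $u^2-2v^2$, verify them (by Bailey's lemma with a named pair, or by any other means), and only then run the ideal-counting argument. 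As written, the proposal is a correct strategy statement, not a proof.
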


Both of $f_3(q)$ and $f_4(q)$ represent sums over all of the ideals. In view of this, it is not surprising that
$f_3(q^{-1}) =  f_3(q)$ and $f_4(q^{-1}) =- f_4(q)$.  That is, these series are (essentially) unchanged by 
$q\mapsto q^{-1}$. 

Finally, we summarize the results of Theorems 5-8 of \cite{BK}. 
\begin{theorem}[Theorems 5-8 of \cite{BK}]\label{thm:BK_5678}
\begin{align*}
f_5(q):=& \sum_{n=0}^\infty \frac{(-1)^n q^{\frac{1}{2}n(n+1)} (q)_n} {(q;q^2)_{n+1} }  
&\text{ then }& \hspace{.2in} qf_5(q^{4}) = \sum_{\begin{subarray}{c} \fa \subset \Z[\sqrt{3}] 
\\ N(\fa) \equiv 1\pmod{4} \end{subarray}}  q^{N(\fa)}, \\
f_6(q):=& \sum_{n=1}^\infty \frac{(-1)^n q^{n} (q^2;q^2)_{n-1}}{(q^n;q)_n }  
&\text{ then}&  \hspace{.2in} q^{-1}f_6(q^{4}) = - \sum_{\begin{subarray}{c} \fa \subset \Z[\sqrt{3}]\\ N(\fa) \equiv 3\pmod{4}  \end{subarray}}   q^{\abs{N(\fa)}}, \\
f_7(q):=& \sum_{n=0}^\infty \frac{ (-1)^n q^{n^2+n} (q^2;q^2)_{n} }{(-q)_{2n+1} }  
&\text{ then } & \hspace{.2in} qf_7(q^{3}) =  - \sum_{\begin{subarray}{c} \fa \subset \Z[\sqrt{3}] \\
N(\fa) \equiv 1\pmod{3} \end{subarray}} (-1)^{N(\fa)} q^{N(\fa)}, \\
f_8(q):=& \sum_{n=1}^\infty \frac{(q)_{n-1} q^{n} }{(-q^n;q)_{n} }  
& \text{ then } &\hspace{.2in} q^{-1} f_8(q^3) = \sum_{\begin{subarray}{c} \fa \subset \Z[\sqrt{3}] \\
N(\fa)\equiv 2\pmod{3}    \end{subarray}}  (-1)^{N(\fa) }q^{\abs{N(\fa)}}.
\end{align*}
\end{theorem}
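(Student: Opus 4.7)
My plan is to attack all four identities in Theorem \ref{thm:BK_5678} by the Bailey-pair method, the same technique the paper invokes through Theorem \ref{thm:CFLZ} for the companion pair $W_1, W_2$. In each case the skeleton has two parts: first, rewrite the basic hypergeometric series defining $f_i(q)$ as an indefinite theta-like double sum; second, identify that double sum with a partial sum over ideals $\fa \subset \bZ[\sqrt{3}]$ whose norm lies in the prescribed residue class.

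For the first step I would seek a Bailey pair $(\alpha_n, \beta_n)$ relative to an appropriate base (typically $q$, $-q$, or $q^2$), chosen so that pairing $\beta_n$ with a suitable geometric factor reproduces the summand of $f_i(q)$. The precise shape of each series dictates the choice: the $(q;q^2)_{n+1}$ in $f_5$ suggests a Bailey pair on a half-integer shifted base; the $(q^2;q^2)_{n-1}/(q^n;q)_n$ structure of $f_6$ invites an index-shifted Bailey pair; and the $(-q)_{2n+1}$ denominator in $f_7$ points to a doubled-base Bailey pair of the type appearing in Slater's lists and in work on Ramanujan's fifth-order mock theta functions. In each case the output should be a double sum of the shape $\sum_{n\ge 0}\sum_{|j|\le n}(-1)^{?}\, q^{Q(n,j)}$, where $Q$ is an indefinite binary quadratic form of discriminant $12$, matching the discriminant of $\bQ(\sqrt{3})$.

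For the second step I would exploit the elementary arithmetic of $\bZ[\sqrt{3}]$: the class number is one, so every integral ideal has a generator $u + v\sqrt{3}$, unique modulo the unit group $\langle -1,\, 2+\sqrt{3}\rangle$. The finite inner range $|j|\le n$ should parametrize a fundamental domain for the action of the fundamental unit on generators of a given norm, and splitting by the value of $u^2 - 3v^2$ modulo $3$ or $4$ will sort contributions into the four residue classes appearing in the theorem. The sign character $(-1)^{N(\fa)}$ in the identities for $f_7$ and $f_8$ should emerge directly from the parity $(-1)^n$ in the outer sum, once the bijection has been set up.

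The main obstacle, I expect, will be step one: locating the correct Bailey pair for each $f_i$ is not algorithmic, and the presence of the sign character in $f_7$ and $f_8$ will force careful bookkeeping of signs through the Bailey transform. An alternative, closer in spirit to the CFLZ proof of Theorem \ref{thm:CFLZ}, would be to first establish explicit theta-series expansions of $f_5, \dots, f_8$ analogous to Theorems 2.3 and 2.4 of \cite{CFLZ}, and only then split the resulting indefinite theta sums by congruence class of the norm. Either route reduces the problem to identifying an indefinite theta series with a partial $L$-series attached to a finite-order character on the ideals of $\bQ(\sqrt{3})$, which is elementary once the combinatorial matching is in place.
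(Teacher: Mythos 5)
First, a point of context: the paper does not prove this statement at all --- it is reproduced verbatim as Theorems 5--8 of \cite{BK}, exactly as Theorem \ref{thm:CFLZ} is reproduced from \cite{CFLZ}, so there is no internal argument to compare yours against. Your proposed strategy (find a Bailey pair for each $f_i$, convert it to a Hecke-type indefinite theta double sum of discriminant $12$, then match the lattice points with ideals of $\bZ[\sqrt{3}]$ sorted by the residue of the norm) is indeed the method of the cited source, and it is the same template the paper itself sketches for $W_1,W_2$, where the intermediate double-sum expansions are quoted as Theorems 2.3 and 2.4 of \cite{CFLZ}. So the route is the right one.

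Nevertheless, as written this is a plan rather than a proof, and the gap sits exactly where the content lives. You never exhibit a single Bailey pair, nor any of the four intermediate identities of the shape $f_i(q)=\sum_{n}\sum_{j}\pm\,q^{Q(n,j)}(\cdots)$; you only assert that the output ``should be'' a double sum attached to an indefinite form of discriminant $12$. But the dilations $q\mapsto q^4$ versus $q\mapsto q^3$, the congruence conditions modulo $4$ versus modulo $3$, and the characters $(-1)^{N(\fa)}$ in the $f_7$ and $f_8$ identities are all read off from the explicit exponent $Q(n,j)$ and the explicit sign pattern, so your second step cannot even be set up until the first is carried out concretely. A second omission concerns signs of norms: since the fundamental unit $2+\sqrt{3}$ has norm $+1$ and $\bZ[\sqrt{3}]$ has no unit of norm $-1$, the generator-defined norm used throughout this section has a well-defined sign on each ideal, and the four series separate positive-norm ideals ($f_5$, $f_7$) from negative-norm ones ($f_6$, $f_8$) --- this is precisely why $\abs{N(\fa)}$ appears in two of the four identities, and (since $u^2-3v^2\equiv 1\pmod 4$ for every odd value) it is what the residue conditions modulo $4$ are really detecting. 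Your fundamental-domain discussion treats the unit action but ignores this sign dichotomy, without which the matching of $f_6$ and $f_8$ to the stated ideal sums does not go through. To close the argument you would need either to derive the four double-sum expansions explicitly (as in \cite{BK}) or to cite them, and then carry out the lattice-point/ideal bijection separately on each norm-sign component.
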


Direct calculation shows that 
$$ f_5(q^{-1}) = - f_6(q) \ \ \ \text{ and } \ \ \ f_7(q^{-1}) = - f_8(q).$$
This suggests that the series $f_7(q)$ is giving us the positive Fourier coefficients and 
$f_8(q)$ is giving the negative Fourier coefficients of a Maass waveform of eigenvalue $1/4$.  
However, we have been unable to identify an appropriate Hecke character. 
A similar story is suggestive for $f_5$ and $f_6$.

\vspace{.1in}
The following series is introduced in Lovejoy \cite{lovejoy}
\begin{equation}
LL(q) := \sum_{n=1}^\infty \frac{(q)_{n-1} (-1)^n q^{\frac{1}{2} n(n+1)}}{(-q)_n}.
\end{equation}
The method of Bailey pairs gives 
\begin{equation}
LL(q) = \sum_{n=1}^\infty \sum_{-n< j \le n} (-1)^{n+j+1} q^{2n^2 - j^2}
\end{equation}
for $\abs{q} < 1$ 
(see (2.11) of \cite{lovejoy}). 
This readily yields the  following claim which appears in the proof of 
Theorem 1.1 of \cite{lovejoy}. 
\begin{theorem}[See the proof of Theorem 1.1 of \cite{lovejoy}]\label{thm:lovejoy}
For $\abs{q}<1$ we have 
$$LL(q) = \sum_{n=1}^\infty \frac{(q)_{n-1} (-1)^n q^{\frac{1}{2}n(n+1)}}{(-q)_n}  = -\sum_{ \fa \subset \Z[\sqrt{2}] : N(\fa) <0} i^{N(\fa)^2 + N(\fa)} q^{\abs{N(\fa)}}.$$
\end{theorem}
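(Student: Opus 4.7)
The plan is to start from the Bailey-pair identity
$$LL(q) = \sum_{n\geq 1} \sum_{-n < j \leq n} (-1)^{n+j+1} q^{2n^2 - j^2},$$
quoted as equation (2.11) of \cite{lovejoy}, and interpret the right-hand side as a sum over ideals of $\bZ[\sqrt{2}]$ via the correspondence $(n,j) \leftrightarrow \alpha := j + n\sqrt{2}$. For such $\alpha$ the norm is $N(\alpha) = j^2 - 2n^2$, so under the constraints $n \geq 1$ and $-n < j \leq n$ we have $N(\alpha) < 0$ and $|N(\alpha)| = 2n^2 - j^2$, matching the exponent of $q$ exactly.

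The heart of the argument is a bijection between the index set $R = \{(n,j) : n \geq 1,\ -n < j \leq n\}$ and the set of ideals $\fa \subset \bZ[\sqrt{2}]$ admitting a generator of negative norm. Since $\bZ[\sqrt{2}]$ has class number one and fundamental unit $\ve = 1 + \sqrt{2}$ with $N(\ve) = -1$, each such ideal carries a single orbit of negative-norm generators under the subgroup $\langle -1, \ve^2\rangle$ (noting $N(\ve^2) = +1$). I would verify that $R$ is a fundamental domain for this action: the condition $n \geq 1$ selects one of $\pm\alpha$, while direct computation gives $\ve^{2}\alpha = (3j + 4n) + (2j + 3n)\sqrt{2}$ and $\ve^{-2}\alpha = (3j - 4n) + (-2j + 3n)\sqrt{2}$, and substituting $-n < j \leq n$ shows that neither lies in $R$. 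Existence follows from a standard descent picking the orbit representative of minimal positive $\sqrt{2}$-coefficient. The asymmetry in $-n < j \leq n$ is exactly what is needed to rule out boundary coincidences.

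The remaining step is to match coefficients, i.e.\ to show $(-1)^{n+j+1} = -i^{N^2 + N}$ for $N = j^2 - 2n^2 < 0$. Since $N(N+1)$ is always even, this reduces to the parity congruence $n + j \equiv N(N+1)/2 \pmod{2}$. Writing $N = -m$ with $m = 2n^2 - j^2 > 0$ and using $j^2 \bmod 8 \in \{0,1\}$ together with $2n^2 \bmod 8 \in \{0,2\}$, one pins down $m \bmod 4$ according to the parities of $j$ and $n$, from which $m(m-1)/2 \bmod 2$ is immediate; a routine four-case check in the parities of $(n,j)$ then confirms the congruence.

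I expect the main obstacle to be the fundamental-domain verification: one must carefully handle the asymmetric boundary of $R$ to guarantee that each ideal with a negative-norm generator is represented exactly once, neither missed nor duplicated. The Bailey-pair input is already handled in \cite{lovejoy}, and the sign-matching is a finite and mechanical verification once the bijection is in place.
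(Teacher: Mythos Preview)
Your proposal is correct and follows exactly the route the paper indicates: the paper does not give a detailed proof but simply records the Bailey-pair identity $LL(q)=\sum_{n\ge 1}\sum_{-n<j\le n}(-1)^{n+j+1}q^{2n^2-j^2}$ from \cite{lovejoy} and says that it ``readily yields'' the theorem. Your write-up supplies precisely the omitted details---the fundamental-domain argument for $\langle -1,\ve^2\rangle$ acting on negative-norm elements and the parity check $(-1)^{n+j+1}=-i^{N^2+N}$---and both are handled correctly.
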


The series $LL(q)$ does not behave well under $q\mapsto q^{-1}$.  
Let 
\begin{equation}
L(q):= \sum_{n=1}^\infty \frac{q^n (q^2;q^2)_{n-1}}{(-q^2;q^2)_n}.
\end{equation} The following theorem identifies $L(q)$ and $LL(q)$. 
\begin{proposition}
We have 
$L(-q) =LL(q). $
\end{proposition}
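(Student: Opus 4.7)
The plan is to transform $LL(q)$ into $L(-q)$ by a single application of Ramanujan's Entry 1.7.2, equation \eqref{eqn:entry1.7.2}, with the specialization $(a,b)=(-q,q)$.

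First I would reindex $LL(q)$ by $n\mapsto n+1$. Using the elementary identity $\tfrac{1}{2}(n+1)(n+2)=\tfrac{1}{2}n(n+1)+(n+1)$ and $(-1)^n q^n=(-q)^n$, this gives
\[
LL(q)=-q\sum_{n=0}^{\infty}\frac{(q;q)_n\,(-q)^n\,q^{\frac{1}{2}n(n+1)}}{(-q;q)_{n+1}}.
\]
Next I would specialize \eqref{eqn:entry1.7.2} at $a=-q$, $b=q$. With these choices one has $(-aq/b;q)_n=(q;q)_n$, $b^n(-1)^n=(-q)^n$, and $(-b;q)_{n+1}=(-q;q)_{n+1}$, so the left-hand side of \eqref{eqn:entry1.7.2} matches the above sum exactly. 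The right-hand side becomes
\[
\sum_{n=0}^{\infty}\frac{(-q)^n(-q;q)_n(q;q)_n}{(-q^2;q^2)_{n+1}}
=\sum_{n=0}^{\infty}\frac{(-q)^n(q^2;q^2)_n}{(-q^2;q^2)_{n+1}},
\]
where I have used the factorization $(q;q)_n(-q;q)_n=(q^2;q^2)_n$. Multiplying by the prefactor $-q$ and reindexing $n\mapsto n-1$ converts this into
\[
\sum_{n=1}^{\infty}\frac{(-1)^n q^n(q^2;q^2)_{n-1}}{(-q^2;q^2)_n},
\]
which, since $(-q)^2=q^2$, is manifestly $L(-q)$ by the definition of $L$.

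The proof is essentially a bookkeeping exercise; the only real obstacle is spotting the correct specialization of \eqref{eqn:entry1.7.2}. The denominator $(-q;q)_{n+1}$ appearing after the shift forces $b=q$, and then the numerator factor $(q;q)_n$ forces $-aq/b=q$, i.e.\ $a=-q$. Once this match is made, the identity $(q;q)_n(-q;q)_n=(q^2;q^2)_n$ collapses the right-hand side of \eqref{eqn:entry1.7.2} precisely to the series defining $L(-q)$, and no further transformation is required.
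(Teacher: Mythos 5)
Your proposal is correct and follows essentially the same route as the paper: both apply Ramanujan's Entry 1.7.2 \eqref{eqn:entry1.7.2} with $a=-q$, $b=q$, use $(q;q)_n(-q;q)_n=(q^2;q^2)_n$ to collapse the right-hand side, and then multiply by $-q$ and shift the summation index to recover $LL(q)=L(-q)$. The bookkeeping in your version is accurate, so no changes are needed.
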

\begin{proof}
Letting $a = -q$ and $b = q$ in \eqref{eqn:entry1.7.2} 
 we obtain 
 $$\sum_{n=0}^\infty \frac{(q)_n (-1)^n q^{\frac{1}{2}n(n+1) + n}}{(-q)_{n+1}} 
 = \sum_{n=0}^\infty \frac{(-q)^{n} (-q)_n (q)_n}{(-q^2;q^2)_{n+1}}.$$
 Multiplying both sides by $-q$ and shifting each sum to $1$ to infinity gives the result. 
\end{proof}

The series $L$ has a nice symmetry.  Namely,  
$$L(q^{-1}) = L(-q).$$ This symmetry is reflected in the arithmetic of the field $\Q[\sqrt{2}]$. 
Namely, it amounts to the fact that there is an element of norm $-1$.  

\begin{challenge*}
Relate $f_1(q), f_2(q), \ldots, f_8(q)$, and $L(q)$ and 
$f_1(q^{-1}), f_2(q^{-1}), \cdots, f_8(q^{-1})$  
to a Hecke characters or indefinite quadratic forms.
\end{challenge*}

\vspace{.1in}
Each of the series of this section it is clear that the $q$-hypergeometric series see arithmetic of the 
field in question.  However, these series are not associated with Hecke characters like the series 
$\sigma, \sigma^*$, and $W$ discussed in Sections \ref{sec:Intro} and \ref{sec:Results}.

\vspace{.1in}
Finally, returning to the series $W(q)$ of Theorems \ref{thm:W2}- \ref{thm:realquadratic_quantum}, note that   for $\abs{q}<1$
Jackson's transformation (p.14 of \cite{GR}) gives 
\begin{equation}\label{eqn:W1_jacksonTransformation}
\cS[W](q) := -\sum_{n=0}^\infty \frac{(q;q^2)_n (-1)^nq^n}{(-q^2;q^2)_n} 
   = \sum_{n=0}^\infty \frac{q^{n^2+n}}{(-q^2;q^2)_n (1+q^{2n+1})}.
\end{equation}
The series on the right  converges for $\abs{q}<1$ and $\abs{q}>1$.  It is would be interesting to relate 
the series 
\begin{align*}
\sum_{n=0}^\infty &  \frac{q^{-n^2-n}}{(-q^{-2};q^{-2})_n (1+q^{-2n-1})}
 = \sum_{n=0}^\infty \frac{q^{2n+1}}{(-q^2;q^2)_n (1+q^{2n+1})} \\
& \hspace{.8in} = q - q^2 + 2q^3 - q^4 + q^5 - 2q^6 + \cdots - 14 q^{46} + 6q^{47} + 13q^{48} - 8q^{49} + \cdots 
 \end{align*}
to $W(q)$.
\begin{challenge*}
Relate the series 
$\sum_{n=0}^\infty \frac{q^{2n+1}}{(-q^2;q^2)_n (1+q^{2n+1})}$ to $W(q)$ defined in Section \ref{sec:Results}.
\end{challenge*} 

\vspace{.2in}
\subsection{A Final Challenge}
There remains a final challenge which is the most mysterious and
interesting.  In the definition of the map $\cS[H](q)$ the series
$\cG[H](q)$ vanishes to infinite order at every root of unity where
the renormalized sum makes sense. Thus the theory of quantum modular
forms cannot see this term and does not make any predictions about its
nature.  None-the-less, there seems to be structure to these terms.
In this work, this term is often have a modular form times a divisor
function.  Can the space of ghost terms be characterized?

\end{document}